\newcommand{\Q}{\mathbb{Q}}
\newcommand{\R}{\mathbb{R}}
\newcommand{\Hom}{\operatorname{Hom}}
\newcommand{\e}{\varepsilon}
\newcommand{\de}{\delta}
\theoremstyle{plain}
\newtheorem{theorem}{Theorem}
\newtheorem*{theorem*}{Theorem}
\newtheorem{lemma}{Lemma}
\newtheorem{corollary}{Corollary}
\theoremstyle{definition}
\newtheorem*{example}{Example}
\newtheorem{definition}{Definition}
\newtheorem*{remark}{Remark}
\newcommand{\colim}{\operatorname{colim}}
\newcommand{\ds}{\displaystyle}
\newcommand{\Cat}{\mathcal{C}}
\title{\bf{Metric Limits in Categories with a Flow}}
\author{Joshua Cruz}
\date{}
\begin{document}
\maketitle
\begin{abstract} 
In topological data science, categories with a flow have become ubiquitous, including as special cases examples like persistence modules and sheaves. With the flow comes an interleaving distance, which has proven useful for applications. We give simple, categorical conditions which guarantee metric completeness of a category with a flow, meaning that every Cauchy sequence has a limit. We also describe how to find a metric completion of a category with a flow by using its Yoneda embedding. The overarching goal of this work is to prepare the way for a theory of convergence of probability measures on these categories with a flow.
\end{abstract}

\section{Introduction}\label{sec:Introduction}

It is common in applied topology to take a data set and assign to it an object from some category. Examples include persistence modules \cite{Chazal09}, multiparameter persistence modules \cite{Lesnick}, and derived sheaves \cite{KS}; see \cite{MunchCatsFlow} for more examples of categories used in this way. If this category can be given an interleaving distance, we can often treat it as a metric space (up to some technical complications).\\

A general way to get an interleaving distance is to have a flow as defined in \cite{MunchCatsFlow}. On a category $\Cat$, a flow is a functor $T_\e : \Cat\to \Cat$ for every $\e \in [0,\infty)$ satisfying certain properties. It is natural to ask whether $(\Cat, T_\e)$ is metrically complete\footnote{It is inconvenient that many terms we would like to use (such as \emph{limit}, \emph{complete}, \emph{dense}, and even \emph{Cauchy complete category}) have already be defined by category theorists in other contexts; see \cite{Lawvere,CauchyCompleteCatTheory}. We will try to avoid confusion by using the adjective \emph{metric} or adverb \emph{metrically} to denote terms related to limits coming from the interleaving distance and by using the adjective \emph{categorical} to refer to the category theory versions.}; i.e. if every Cauchy sequence has a metric limit. In particular, one might hope that there are categorical conditions we can place on $\Cat$ to ensure it is. \\

In this paper, we study the issue of metric completeness of categories with a flow. We are particularly interested in the connection between categorical properties of $\Cat$ and $T_\e$ and their metric properties. The following is an example of the kind of theorem that is proved:

\begin{theorem*} Let $(\Cat, T_\e)$ be a category with a flow. Then $(\Cat, T_\e)$ is metrically complete if $\Cat$ is categorically complete and $T_\e$ preserves limits.
\end{theorem*}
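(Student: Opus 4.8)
The plan is to realise the metric limit of a Cauchy sequence as an ordinary categorical limit of a suitably shifted tail of that sequence: completeness of $\Cat$ lets us form the limit, and preservation of limits by the $T_\e$ is used to produce the interleaving morphisms witnessing convergence. First I would reduce to a well-behaved subsequence. Since $d$ is an extended pseudometric and the sequence is Cauchy, we may discard an initial segment to make all pairwise distances finite and then pass to a subsequence, renamed $(X_k)_{k\in\N}$, with $d(X_k,X_{k+1})<\e_k:=2^{-k}$; then $\sum_k\e_k<\infty$, the objects $X_k$ and $X_{k+1}$ are $\e_k$-interleaved, and I fix witnessing morphisms $f_k\colon X_k\to T_{\e_k}X_{k+1}$ and $g_k\colon X_{k+1}\to T_{\e_k}X_k$ with $T_{\e_k}g_k\circ f_k=\varphi^{X_k}$ and $T_{\e_k}f_k\circ g_k=\varphi^{X_{k+1}}$, where $\varphi^{Z}\colon Z\to T_{2\e_k}Z$ denotes the appropriate structure map of the flow. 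Since a Cauchy sequence converges as soon as any of its subsequences does, it suffices to produce a metric limit of this subsequence.

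Next I would build the candidate. Put $s_k:=\sum_{j\ge k}\e_j$, so $s_k\downarrow 0$ and $s_k=\e_k+s_{k+1}$, and set $Y_k:=T_{s_k}X_k$. Whiskering $g_k$ by $T_{s_{k+1}}$ and using $T_{s_{k+1}}T_{\e_k}=T_{s_k}$ yields a morphism $Y_{k+1}\to Y_k$, and hence an $\omega^{\mathrm{op}}$-indexed diagram $\cdots\to Y_2\to Y_1\to Y_0$ in $\Cat$. Since $\Cat$ is categorically complete this diagram has a limit $Y:=\lim_k Y_k$, with projections $\pi_k\colon Y\to Y_k$. Because $d(Z,T_\e Z)\le\e$ for every object $Z$ (an interleaving is given by $\varphi^Z$ and $\mathrm{id}_{T_\e Z}$) we have $d(X_k,Y_k)\le s_k\to 0$, so $Y$ is a plausible limit; concretely, I would show that $X_k$ and $Y$ are $s_k$-interleaved for every $k$, which gives $d(X_k,Y)\to 0$ as desired.

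Constructing that interleaving, one direction is immediate: compose $\pi_k\colon Y\to Y_k=T_{s_k}X_k$ with the relevant structure map to obtain a morphism $Y\to T_{s_k}X_k$. For the other direction I invoke the hypothesis that $T_{s_k}$ preserves limits, so $T_{s_k}Y\cong\lim_j T_{s_k}Y_j=\lim_j T_{s_k+s_j}X_j$, whereby a morphism $X_k\to T_{s_k}Y$ is the same as a cone $(c_j\colon X_k\to T_{s_k+s_j}X_j)_j$ over this shifted diagram. For $j\ge k$ I take $c_j$ to be $X_k\to T_{s_k-s_j}X_j\to T_{s_k+s_j}X_j$, the first arrow being the iterated composite of the whiskered $f_\ell$'s for $k\le\ell<j$ and the second a structure map; for $j<k$ I take $c_j$ to be $X_k\to T_{s_j-s_k}X_j\to T_{s_k+s_j}X_j$, built from the whiskered $g_\ell$'s for $j\le\ell<k$ followed by a structure map. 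It then remains to check that $(c_j)_j$ is a cone and that the two round-trip composites of the resulting pair of morphisms equal the structure maps $\varphi^{X_k}\colon X_k\to T_{2s_k}X_k$ and $\varphi^{Y}\colon Y\to T_{2s_k}Y$, the latter verified after identifying $T_{2s_k}Y=\lim_j T_{2s_k}Y_j$ and testing against each projection $\pi_j$. Granting this, $d(X_k,Y)\le s_k\to 0$, so $Y$ is the sought metric limit of the subsequence and hence of the original sequence.

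I expect this final verification to be the real obstacle: everything in it is formal, but it is the one place where the coherence data of the flow — the isomorphisms $T_aT_b\cong T_{a+b}$ and the naturality of the structure maps — must be combined with the interleaving identities $T_{\e_\ell}g_\ell\circ f_\ell=\varphi^{X_\ell}$ and $T_{\e_\ell}f_\ell\circ g_\ell=\varphi^{X_{\ell+1}}$, and it takes some care to see that the family $(c_j)_j$ is genuinely compatible with the transition maps of $\lim_j T_{s_k}Y_j$. By contrast the categorical hypotheses are used only lightly — concretely one needs just that $\Cat$ admits limits of $\omega^{\mathrm{op}}$-indexed diagrams and that each $T_\e$ preserves them — which suggests ``categorically complete'' could be weakened to ``has sequential limits''.
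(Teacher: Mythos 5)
Your proposal follows essentially the same route as the paper's own proof (Theorems \ref{thm:CatLimitEqualsMetricLimit} and \ref{thm:CategorialConditionsForCompleteness}): pass to a subsequence with summable interleaving parameters, form the $\omega^{\mathrm{op}}$-diagram of shifted objects $T_{s_k}X_k$ connected by whiskered interleaving maps, take its categorical limit, use the limit projections for one half of the interleaving and limit-preservation of $T_{\e}$ to assemble the other half as a cone, and verify the two round-trip identities; your closing remark that only sequential limits are needed is precisely the weakening recorded in Theorem \ref{thm:CategorialConditionsForCompleteness}. The only caveat is that for a non-strict flow the identifications such as $T_{s_{k+1}}T_{\e_k}=T_{s_k}$ must be read as instances of the one-directional coherence transformations $\mu_{\e,\de}$ (they are not isomorphisms in general), which is exactly how the paper's trapezoid diagrams handle them.
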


These conditions are in fact stronger than what we actually need; see Theorem \ref{thm:CategorialConditionsForCompleteness}. The proof involves constructing a diagram out of a given Cauchy sequence; then (under conditions on $T_\e$), the categorical limit of the diagram is a metric limit of the Cauchy sequence. We also describe the closure of a subcategory with a strict flow in a larger, complete category. This gives an analog of Cauchy completion using the (co-)Yoneda Embedding. \\

We can use this general framework to study completeness for many specific examples of categories with interleavings.  Generalized persistence modules and derived sheaves are two more new-to-the-literature contexts in which we address completeness. \\

Let's give some indication of the motivation for this work. One might try to study convergence of probability measures directly on these categories with interleavings. Lots of previous work has studied stochastic processes at the level of simplicial complexes (e.g. Erd\"os-Renyi simplicial complexes) and at the level of persistence diagrams, persistence landscapes, betti numbers, and other invariants. However, working at the intermediate level of categories with interleaving distances would be preferable in many contexts, especially when a sufficiently descriptive and well-behaved invariant has not been found. Multi-parameter persistence modules come to mind. Very little work on the convergence of probability measures has been done at this level, in part because there does not yet exist strong foundations for this study. What is needed is an understanding at least of metric completion and separability of the category, and hopefully also some idea of the precompact subsets\footnote{Perhaps we should say precompact subcategories.}. Prokhorov's Theorem is a prime example of these three concepts in play.\\

While the majority of this paper is focused around studying the metric completeness of a category with a flow, the last section situates this paper in the broader context of Polish spaces and their applications to the convergence of probability measures.

\subsection*{Overview}

Section \ref{sec:Background} gives background and terminology for categories with a flow, heavily influenced by \cite{MunchCatsFlow}. It also introduces the dual notion of categories with a coflow. Section \ref{sec:Main} defines Cauchy sequences in categories with flows and gives categorical conditions on when they have metric limits. Section \ref{sec:Completions} explains how to use the Yoneda embedding to densely embed any category with a strict flow into a metrically complete category. Section \ref{sec:Examples} studies some example categories and shows whether they are complete or not. Section \ref{sec:Polish} gives some indication of the context and broader interest of these results by giving an application towards finding categories with a flow which are Polish spaces.

\subsection*{Acknowledgements}

I'd like to acknowledge and thank the IMA and the CMO for hosting conferences this year which gave me access to several very useful conversations. In particular, the idea for this paper was conceived at the IMA's conference ``Bridging Sheaves and Statistics''. At both these conferences, many people were very helpful. In particular, Justin Curry, Peter Bubenik, and Nikola Milicevic all gave important observations and pointers that made this paper much better.

\section{Background on Categories with Flows}\label{sec:Background}

The interleaving distance was first defined in the context of persistence modules \cite{Chazal09}. It has since been generalized in many different ways. Lesnick \cite{Lesnick} defines an interleaving distance on multiparameter persistence modules. Bubenik, de Silva, and Scott \cite{Bub15} define an interleaving distance for generalized persistence modules, which are functor categories $[\mathcal{P},\mathcal{D}]$, where $\mathcal{D}$ is any category and $\mathcal{P}$ is a preordered set. Kashiwara and Schapira \cite{KS} define an interleaving distance for constructible derived sheaves on $\R^n$.\\

De Silva, Munch, and Stefanou \cite{MunchCatsFlow,StefanouThesis} make a definition which generalizes all other current examples, at least to the author's knowledge. They define a \emph{category with a flow}, and show that this induces an interleaving distance with the required properties, and that this distance agrees with the proposed distance in many other circumstances. Further, they were able to provide a very general stability theorem using a notion of flow-equivariant functors.

\subsection{Definitions and Main Results}

All the definitions and results from this subsection can be found in \cite{MunchCatsFlow}.

\begin{definition} A flow on a category $\Cat$ is an assignment of a endofunctor $T_\e : \Cat \to \Cat$ for every non-negative real number $\e\geq 0$ along with natural transformations $T_{\de \leq \e}: T_\de \Rightarrow T_\e$ for $\de \leq \e$ so that for all $\de\leq \e\leq \zeta$, the following commutes
\begin{center}
\begin{tikzpicture}
\matrix (m) [matrix of math nodes,row sep=3em,column sep=4em,minimum width=2em]
{
T_\de  &  & T_{\zeta}\\
& T_{\e}&\\
};
\path[-stealth]
    (m-1-1) edge node [above] {$T_{\de \leq \zeta}$} (m-1-3)
    		edge node[xshift=2pt,yshift=-5pt] [left] {$T_{\de \leq \e}$ } (m-2-2)		
    (m-2-2) edge node[xshift=1pt,yshift=-5pt] [right] {$T_{\e \leq \zeta}$ } (m-1-3);
\end{tikzpicture}
\end{center}

Said another way, a flow is a functor $T: [0,\infty) \to \mathbf{End}(\Cat)$ from the poset category of non-negative reals to the category of endofunctors of $\Cat$. Further, we require any flow to come with natural transformations $\mu_{\e, \de}: T_\e T_\de \Rightarrow T_{\e + \de}$ and $u: Id_{\Cat} \Rightarrow T_0$ satifying the following relations:
\begin{center}
\begin{tikzpicture}
\matrix (m) [matrix of math nodes,row sep=3em,column sep=4em,minimum width=2em]
{
 & T_\e  &  \\
T_0 T_\e & & T_\e\\
};
\path[-stealth]
   
    (m-1-2) edge[double, double distance=2pt, -] node [right] { } (m-2-3)		
    		edge node[xshift=5pt,yshift=10pt] [left] {$u I_{T_\e}$ } (m-2-1)
	(m-2-1) edge node [below] {$\mu_{0, \e}$} (m-2-3);
            
\end{tikzpicture}
\hspace{0.5in}
\begin{tikzpicture}
\matrix (m) [matrix of math nodes,row sep=3em,column sep=4em,minimum width=2em]
{
 & T_\e  &  \\
T_\e T_0 & & T_\e\\
};
\path[-stealth]
   
    (m-1-2) edge[double, double distance=2pt, -] node [right] { } (m-2-3)		
    		edge node[xshift=5pt,yshift=10pt] [left] {$I_{T_\e} u$ } (m-2-1)
	(m-2-1) edge node [below] {$\mu_{0, \e}$} (m-2-3);
            
\end{tikzpicture}
\end{center}

\begin{center}
\begin{tikzpicture}
\matrix (m) [matrix of math nodes,row sep=3em,column sep=4em,minimum width=2em]
{
T_\e T_\zeta T_\de   &  & T_\e T_{\zeta + \de} \\
T_{\e + \zeta} T_\de & & T_{\e + \zeta + \de}\\
};
\path[-stealth]
    (m-1-1) edge node [above] {$ Id_{T_\e}\mu_{\zeta,\de }$ } (m-1-3)
    		edge node [left] {$\mu_{\e, \zeta} Id_{T_\de}$ } (m-2-1)	
    (m-1-3) edge node [right] {$\mu_{\e, \zeta + \de}$} (m-2-3)	
    (m-2-1) edge node [below] {$\mu_{\e + \zeta, \de}$ } (m-2-3);
\end{tikzpicture}
\hspace{0.5in}
\begin{tikzpicture}
\matrix (m) [matrix of math nodes,row sep=3em,column sep=4em,minimum width=2em]
{
T_\e T_\zeta  &  & T_{\e + \zeta} \\
T_\de T_\kappa & & T_{\de + \kappa}\\
};
\path[-stealth]
    (m-1-1) edge node [above] {$ \mu_{\e, \zeta}$ } (m-1-3)
    		edge node [left] {$T_{\e \leq \de}T_{\zeta \leq \kappa}$ } (m-2-1)	
    (m-1-3) edge node [right] {$T_{\e + \zeta \leq \de + \kappa}$} (m-2-3)	
    (m-2-1) edge node [below] {$\mu_{\de,\kappa}$ } (m-2-3);
\end{tikzpicture}
\end{center}

A flow is \emph{strict} if the natural transformations $Id_\Cat \Rightarrow T_0$ and $T_\e T_\de \Rightarrow T_{\e + \de}$ are identities for all $\e, \de\in [0,\infty)$. This implies that $T_0 = Id_\Cat$ and $T_\e T_\de = T_{\e + \de}$.\footnote{The implication does not go the other way. There are non-strict flows such that $T_0 = Id_\Cat$ and $T_\e T_\de = T_{\e + \de}$.} A flow is \emph{essentially strict} if the natural transformations $T_\e T_\de \Rightarrow T_{\e + \de}$ and $Id_\Cat \Rightarrow T_0$ are natural isomorphisms.
\end{definition}

\begin{definition}
Two objects $A,B\in \mathcal{C}$ are weakly $\e$-interleaved if there are maps $\alpha: A \to T_\e B$ and $\beta: B \to T_\e A$ and a commutative diagram

\begin{center}
\begin{tikzpicture}
\matrix (m) [matrix of math nodes,row sep=3em,column sep=4em,minimum width=2em]
{
T_0 A & A & B & T_0 B \\
 & T_\e A & T_\e B & \\
T_{2\e} A & T_\e T_\e A & T_\e T_\e B & T_{2\e} B \\ 
};
\path[-stealth]
    (m-1-1) edge node [left] { } (m-3-1)
    (m-1-2) edge node [left] { } (m-1-1)
    		edge node[xshift=-15pt, yshift=5pt] [left] {$\alpha$ } (m-2-3)	
    (m-1-3) edge node [left] { } (m-1-4)
    		edge[shorten >=1.5cm,-] node[xshift=15pt, yshift=5pt] [right] {$\beta$ } (m-2-2)
    		edge[shorten <=1.5cm] node[xshift=15pt, yshift=5pt] [right] {$\beta$ } (m-2-2)
    (m-1-4) edge node [left] { } (m-3-4)
    (m-2-2) edge node[xshift=-15pt, yshift=5pt] [left] {$T_\e \alpha$ }(m-3-3)
    (m-2-3) edge[shorten >=1.5cm,-] node[xshift=15pt, yshift=5pt] [right] {$T_\e \beta$ } (m-3-2)
    		edge[shorten <=1.5cm] node[xshift=15pt, yshift=5pt] [right] { } (m-3-2)
    (m-3-2) edge node [above] { } (m-3-1)
    (m-3-3) edge node [right] { } (m-3-4);
           
\end{tikzpicture}
\end{center}

and we call such a diagram a weak $\e$-interleaving of $A$ and $B$.
\end{definition}

\begin{definition}
The interleaving distance on a category with a flow $(\Cat, T_\e)$ is
$$d_{(\Cat, T_\e)}(A,B) = \inf \{\infty\} \cup \{\e\, \, | A\text{ and } B\text{ are weakly }\e\text{-interleaved}\}$$

When the context is clear, we will drop the subscript and simply write $d(A,B)$.
\end{definition}

\begin{remark}
In the past, most contexts with an interleaving distance used standard $\e$-interleavings (not weak ones). In fact, many contexts rewritten in this framework involve a category with a \emph{strict} flow. For categories with strict flows, weak $\e$-interleavings and the usual $\e$-interleavings are the same, so the interleaving distance is the usual one. 
\end{remark}
%As far as I can tell, Munch et al. care about general flows because they need them to generalize GPMs.

\begin{theorem} The interleaving distance has several desirable properties:

\begin{itemize}
\item $d(A,B) = 0$ if $A$ and $B$ are isomorphic.
\item $d(A,B) = d(B,A)$
\item $d(A,C) \leq d(A,B) + d(B,C)$
\end{itemize}

\end{theorem}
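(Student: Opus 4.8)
The plan is to verify the three listed properties directly from the definitions of weak $\e$-interleaving and the interleaving distance, treating each in turn.

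For the first property, if $A$ and $B$ are isomorphic via $\varphi : A \to B$ with inverse $\psi$, I would exhibit, for every $\e \geq 0$, a weak $\e$-interleaving. The natural candidates are $\alpha : A \to T_\e B$ given by the composite $A \xrightarrow{\varphi} B \xrightarrow{u_B} T_0 B \xrightarrow{T_{0 \leq \e}} T_\e B$, and symmetrically $\beta : B \to T_\e A$ built from $\psi$, $u_A$, and $T_{0 \leq \e}$. The bulk of the work is checking that the large diagram in the definition of weak $\e$-interleaving commutes; this should follow by chasing the naturality squares for $u$ and for $T_{0 \leq \e}$, together with the unit and associativity coherence diagrams for $\mu$ and $u$ displayed in the definition of a flow, and the fact that $\varphi$ and $\psi$ are mutually inverse. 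Since a weak $\e$-interleaving exists for all $\e > 0$, the infimum defining $d(A,B)$ is $0$.

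For symmetry, I would simply observe that the definition of a weak $\e$-interleaving of $A$ and $B$ is itself symmetric in $A$ and $B$: swapping the roles of $A$ and $B$ and of $\alpha$ and $\beta$ sends a weak $\e$-interleaving of $A$ and $B$ to a weak $\e$-interleaving of $B$ and $A$ (the diagram in the definition is symmetric under reflecting across its vertical axis). Hence $A$ and $B$ are weakly $\e$-interleaved if and only if $B$ and $A$ are, so the two sets over which we take infima coincide and $d(A,B) = d(B,A)$.

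The triangle inequality is the main obstacle and the only part requiring real work. Given a weak $\e$-interleaving of $A$ and $B$ (with maps $\alpha_1, \beta_1$) and a weak $\de$-interleaving of $B$ and $C$ (with maps $\alpha_2, \beta_2$), I want to produce a weak $(\e+\de)$-interleaving of $A$ and $C$. The candidate maps are $A \xrightarrow{\alpha_1} T_\e B \xrightarrow{T_\e \alpha_2} T_\e T_\de C \xrightarrow{\mu_{\e,\de}} T_{\e+\de} C$ and symmetrically $C \to T_{\de+\e} A \cong T_{\e+\de} A$. The technical heart is assembling the $(2\e+2\de)$-level commuting diagram: one stacks the two given interleaving diagrams, applies $T_\e$ (or $T_\de$) to one of them where needed, and then glues using the associativity/compatibility coherences for $\mu$ (the two pentagon-like squares in the flow definition relating $T_\e T_\zeta T_\de$-type composites) and the monotonicity naturality square for $T_{\e \leq \de}$. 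I expect this to be a somewhat lengthy but routine diagram chase; the only subtlety is bookkeeping the various $\mu$'s and $T_{\leq}$'s so that every face commutes. Once such a diagram is built for the chosen $\e$ and $\de$, taking infima over all valid $\e$ with $d(A,B) < \e$ and all valid $\de$ with $d(B,C) < \de$ gives $d(A,C) \leq \e + \de$ for all such pairs, hence $d(A,C) \leq d(A,B) + d(B,C)$ (with the convention that the inequality is trivial when either term is $\infty$).
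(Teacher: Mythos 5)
Your outline is correct, but be aware that the paper itself contains no proof of this theorem: the subsection opens by saying all results there are taken from \cite{MunchCatsFlow}, so the theorem is imported by citation and the real comparison is with the proof in that reference --- which your argument essentially reproduces. Each step checks out. For the first bullet, your candidates $\alpha = T_{0\leq\e}\circ u_B\circ\varphi$ and $\beta = T_{0\leq\e}\circ u_A\circ\psi$ do yield a weak $\e$-interleaving for every $\e>0$; the required identity $\mu_{\e,\e}\circ T_\e\beta\circ\alpha = T_{0\leq 2\e}\circ u_A$ follows from naturality of $u$ and of $T_{0\leq\e}$, the unit law $\mu_{\e,0}\circ(I_{T_\e}u)=\mathrm{id}$, the square relating $\mu$ to the transformations $T_{\de\leq\e}$, and $\psi\varphi=\mathrm{id}_A$, exactly as you indicate. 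Symmetry is indeed immediate from the symmetry of the defining diagram. For the triangle inequality your composite maps are the right ones, and the coherence cells you name (the associativity square for $\mu$ and its compatibility with $T_{\de\leq\e}$) are precisely what is needed to make the $2(\e+\de)$-level diagram commute; in the weak (non-strict) setting this chase is the genuine content of the proof and a complete write-up would have to carry it out. One small point of phrasing: in the final limiting step you should select, for each $\eta>0$, actual interleaving parameters $\e'$ and $\de'$ from the defining sets with $\e'<d(A,B)+\eta$ and $\de'<d(B,C)+\eta$ (rather than arbitrary $\e>d(A,B)$, since a priori the set of parameters for which a weak interleaving exists need not be upward closed), conclude $d(A,C)\leq\e'+\de'$, and let $\eta\to 0$.
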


\begin{remark}
The interleaving distance is not quite a metric. Distances can be infinite, and distances can be zero even if the two objects are not isomorphic. 
\end{remark}

\begin{definition}
A strict flow-equivariant functor $H:(\Cat, T_\e) \to (\mathcal{D}, S_\e)$ between categories with a flow is an ordinary functor $H:\Cat \to \mathcal{D}$ so that $HT_\e = S_\e H$ and the following diagrams commute

\begin{center}
\begin{tikzpicture}
\matrix (m) [matrix of math nodes,row sep=3em,column sep=4em,minimum width=2em]
{
H & H\\
HT_0 & S_0 H\\
};
\path[-stealth]
   
    (m-1-1) edge[double, double distance=2pt, -] node [right] { } (m-1-2)		
    		edge node [left] { } (m-2-1)
    (m-1-2) edge node [right] { } (m-2-2)
	(m-2-1) edge[double, double distance=2pt, -] node [right] { } (m-2-2);
            
\end{tikzpicture}
\hspace{0.5in}
\begin{tikzpicture}
\matrix (m) [matrix of math nodes,row sep=3em,column sep=4em,minimum width=2em]
{
HT_\de & S_\de H  \\
HT_\e & S_\e H\\
};
\path[-stealth]
   
    (m-1-1) edge[double, double distance=2pt, -] node [right] { } (m-1-2)		
    		edge node [left] { } (m-2-1)
    (m-1-2) edge node [right] { } (m-2-2)
	(m-2-1) edge[double, double distance=2pt, -] node [right] { } (m-2-2);
            
\end{tikzpicture}
\hspace{0.5in}
\begin{tikzpicture}
\matrix (m) [matrix of math nodes,row sep=3em,column sep=4em,minimum width=2em]
{
HT_\e T_\de & S_\e S_\de H\\
HT_{\e + \de} & S_{\e + \de} H\\
};
\path[-stealth]
   
     (m-1-1) edge[double, double distance=2pt, -] node [right] { } (m-1-2)		
    		edge node [left] { } (m-2-1)
    (m-1-2) edge node [right] { } (m-2-2)
	(m-2-1) edge[double, double distance=2pt, -] node [right] { } (m-2-2);
            
\end{tikzpicture}
\end{center}
\end{definition}

Note that our terminology differs slightly from \cite{MunchCatsFlow}.\footnote{We do this because ``flow-equivariant'' can be dualized to ``coflow-equivariant'' when we talk about categories with coflows. The alternative was to call the analogous functors for coflows ``$[0,\infty)^{op}$-equivariant'', which seemed worse.} Also, while this definition is strong enough for the purposes of this paper, \cite{MunchCatsFlow} defines a more general kind of functor and proves a Stability theorem at this level of generality. We will content ourselves with using the more specific version stated here.

\begin{theorem} (Soft Stability Theorem) Let $H$ be a strict flow-equivariant functor between $(\Cat, T_\e)$ and $(\mathcal{D}, S_\e)$. Then $H$ is 1-Lipschitz with respect to the categories' interleaving distances.
\end{theorem}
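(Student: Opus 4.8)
The plan is to prove the slightly stronger statement that $H$ carries weak $\e$-interleavings to weak $\e$-interleavings; the Lipschitz bound then drops out of the definition of the interleaving distance as an infimum. Concretely, fix objects $A,B\in\Cat$ that are weakly $\e$-interleaved, choose interleaving maps $\alpha:A\to T_\e B$ and $\beta:B\to T_\e A$ together with a weak $\e$-interleaving diagram witnessing this, and apply the functor $H$ to every object and morphism in that diagram. Since $H$ is a functor it sends commutative diagrams to commutative diagrams, so the only real work is to recognize the resulting diagram as an honest weak $\e$-interleaving of $HA$ and $HB$ inside $(\mathcal{D},S_\e)$.

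To do that I would use the strict equality $HT_\e=S_\e H$ (equality of functors) to make the identifications $HT_0A=S_0HA$, $HT_\e B=S_\e HB$, $HT_{2\e}A=S_{2\e}HA$, and $HT_\e T_\e A=S_\e S_\e HA$, and symmetrically with $A$ and $B$ swapped. Under these identifications $H\alpha$ and $H\beta$ become maps $HA\to S_\e HB$ and $HB\to S_\e HA$, the candidate interleaving maps; and because $H\circ T_\e=S_\e\circ H$ on the nose, the whiskered edge $H(T_\e\beta)$ equals $S_\e(H\beta)$, so those edges also land where they must. It then remains to check that $H$ sends the three kinds of structural morphism appearing in the weak interleaving square — the unit components $u^T_A,u^T_B$, the comparison maps $T_{0\le 2\e}$, and the multiplication components $\mu^T_{\e,\e}$ — to the corresponding structural morphisms $u^S$, $S_{0\le 2\e}$, $\mu^S_{\e,\e}$ of the flow $S_\e$. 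This is exactly what the three coherence squares in the definition of a strict flow-equivariant functor assert (the first square handles $u$, the second handles the comparison maps $T_{\de\le\e}$ in the case $\de=0$, $\e$ replaced by $2\e$, and the third handles $\mu$), so the image diagram is a weak $\e$-interleaving of $HA$ and $HB$.

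Having established that, I would conclude: the set $\{\e\mid A,B\text{ weakly }\e\text{-interleaved}\}$ is contained in $\{\e\mid HA,HB\text{ weakly }\e\text{-interleaved}\}$, hence $d_{(\mathcal{D},S_\e)}(HA,HB)\le d_{(\Cat,T_\e)}(A,B)$, with the inequality holding trivially when the right-hand side is $\infty$. This is precisely the assertion that $H$ is $1$-Lipschitz.

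The main obstacle is organizational rather than conceptual: one must keep careful track of which structural natural transformation sits on each edge of the (fairly large) weak interleaving diagram and confirm that the particular instances needed are all covered by the coherence data of a strict flow-equivariant functor — in particular, noticing that only $\mu_{\e,\e}$, $T_{0\le 2\e}$, and $u$ occur, each of which is handled by one of the three squares. It is also worth stating explicitly that functoriality of $H$ is what reduces the whole argument to this edge-by-edge identification, so no hypothesis on $H$ beyond functoriality and the strict equivariance squares is used; in particular the argument does not need the more general equivariant functors of \cite{MunchCatsFlow}.
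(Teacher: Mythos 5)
Your argument is correct and complete: applying $H$ to a weak $\e$-interleaving diagram and using the strict equality $HT_\e=S_\e H$ together with the three coherence squares (which match $u$, $T_{0\le 2\e}$, and $\mu_{\e,\e}$ to $u^S$, $S_{0\le 2\e}$, and $\mu^S_{\e,\e}$) does produce a weak $\e$-interleaving of $HA$ and $HB$, and the Lipschitz bound follows from the containment of the sets over which the infima are taken. The paper itself gives no proof of this theorem --- it is imported from \cite{MunchCatsFlow} --- and your proof is exactly the standard argument used there, so there is nothing to compare beyond noting that you have correctly identified that only those three structural morphisms occur in the diagram.
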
 

\subsection{Coflows}

It is not surprising that a categorical structure like a category with a flow would have a dual structure. We call this dual structure a category with a coflow. \\

There is a duality between categories with a flow and categories with a coflow. The following can be thought of as a definition, lemma, or remark, depending on your point of view:

\begin{definition} $(\Cat, T_\e)$ is a category with a coflow if and only if $(\Cat^{op}, T_\e^{op})$ is a category with a flow.
\end{definition}

Some might prefer the language of actegories. Then if a category with a flow is a $[0,\infty)$-actegory, a category with a coflow is a $[0,\infty)^{op}$-actegory.\\

All of the previous section can be dualized: there is a notion of weak $\e$-interleavings, of an interleaving distance, and of strict coflow-equivariant functors for categories with coflows. Likewise, the Triangle Inequality for the interleaving distance and the Soft Stability Theorem are still true for categories with a coflow.\\

In general, categories with coflows come up in cohomological contexts, while categories with flows come up in homological contexts. For example, persistence modules model persistent homology and form a category with a flow. Persistent cohomology lives in the opposite category, which is a category with a coflow. The category of sheaves and the category of derived sheaves are two more examples of categories with interleaving distances which come from a coflow.

\section{Cauchy Sequences in Categories with Interleavings}\label{sec:Main}

\subsection{Definition and Basic Results}
We will use the following lemma repeatedly and implicitly in this section.

\begin{lemma} The natural transformation $T_\de \Rightarrow T_{\e + \de}$ factors through $T_\de \Rightarrow T_0T_\de \Rightarrow T_\e T_\de \Rightarrow T_{\e + \de}$.
\end{lemma}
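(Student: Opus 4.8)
The plan is a short diagram chase using exactly two of the flow axioms: the left unit triangle $\mu_{0,\de}\circ (u\,Id_{T_\de}) = Id_{T_\de}$, and the square expressing compatibility of the multiplication $\mu$ with the comparison maps $T_{\bullet\leq\bullet}$ (the last of the commuting squares in the definition of a flow). Spelled out, the claim is that the structure map $T_{\de\leq\e+\de}\colon T_\de\Rightarrow T_{\e+\de}$ coincides with the composite
$$T_\de \xrightarrow{\;u\,Id_{T_\de}\;} T_0 T_\de \xrightarrow{\;T_{0\leq\e}\,Id_{T_\de}\;} T_\e T_\de \xrightarrow{\;\mu_{\e,\de}\;} T_{\e+\de},$$
where $u\,Id_{T_\de}$ and $T_{0\leq\e}\,Id_{T_\de}$ denote the evident whiskerings on the right by $T_\de$.

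First I would instantiate the compatibility square with $\e\rightsquigarrow 0$, $\zeta\rightsquigarrow\de$, $\de\rightsquigarrow\e$, $\kappa\rightsquigarrow\de$. Using $T_{\de\leq\de}=Id_{T_\de}$ and $0+\de=\de$, the square becomes
$$T_{\de\leq\e+\de}\circ\mu_{0,\de}\;=\;\mu_{\e,\de}\circ\bigl(T_{0\leq\e}\,Id_{T_\de}\bigr).$$
Next I would precompose both sides with $u\,Id_{T_\de}\colon T_\de\Rightarrow T_0T_\de$. On the left, $\mu_{0,\de}\circ(u\,Id_{T_\de})=Id_{T_\de}$ by the unit triangle, so the left side collapses to $T_{\de\leq\e+\de}$; the right side is precisely the threefold composite displayed above. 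This establishes the factorization.

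There is essentially no serious obstacle; the only care needed is bookkeeping. One must match the generic indices of the axiom square to the specialization above, and be careful to whisker $T_{0\leq\e}$ and $u$ on the \emph{right} by $T_\de$ (not the left), so that the composite produced is literally the one in the statement. It is also worth noting the degenerate case $\e=0$: there the factorization reduces to $T_{\de\leq\de}=Id_{T_\de}=\mu_{0,\de}\circ(u\,Id_{T_\de})$, which is again just the unit triangle, so the statement holds uniformly in $\e$.
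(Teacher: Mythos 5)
Your argument is correct and is essentially identical to the paper's proof: the paper's single commutative diagram is exactly the unit triangle $\mu_{0,\de}\circ(u\,I_{T_\de})=Id_{T_\de}$ glued to the compatibility square instantiated as $T_{\de\leq\e+\de}\circ\mu_{0,\de}=\mu_{\e,\de}\circ(T_{0\leq\e}\,Id_{T_\de})$, which is precisely your two steps. Nothing further is needed.
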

\begin{proof}
Combining two diagrams from section 2, we get the commutative diagram 

\begin{center}
\begin{tikzpicture}
\matrix (m) [matrix of math nodes,row sep=3em,column sep=4em,minimum width=2em]
{
 & T_\de  &  & T_{\e + \de} \\
T_\de & T_0 T_\de & & T_\e T_\de \\
};
\path[-stealth]

    (m-1-2) edge node [above] {$ T_{\de \leq \e+\de}$ } (m-1-4)
    (m-2-1) edge node [below] {$u\, I_{T_\de}$} (m-2-2)	
    		edge[double, double distance=2pt, -]  node [above] { } (m-1-2)
    (m-2-2) edge node [right] {$\mu_{0,\de}$ } (m-1-2)
    		edge node [below] {$T_{0\leq \e} T_{\de\leq \de }$} (m-2-4)
    (m-2-4) edge node [right] {$ \mu_{\e, \de}$ } (m-1-4);

\end{tikzpicture}
\end{center}
\end{proof}

\begin{definition} Let $\Cat$ be a category with a flow. Define a sequence of objects $\{A_n\}$ to be \emph{Cauchy} if for every $\e > 0$, there is an $N_\e$ so that for all $n,m \geq N_\e$, $d(A_n, A_m) < \e$.\\

$A$ is a \emph{metric limit} of the sequence $\{A_n\}$ if for every $\e > 0$, there is an $N_\e$ so that for all $n > N_\e$, $d(A_n, A) < \e$. If every Cauchy sequence $\{A_n\}$ in $\Cat$ has a metric limit, we say $\Cat$ is \emph{metrically complete}.
\end{definition}

\begin{remark}This definition is probably not surprising, but there is a small reason to include it. Cauchy sequences are usually defined for sets with metrics (and quasi-metrics, and extended quasi-metrics); that is to say, for sets. But in this context, we are not guaranteed that the collection of objects of $\Cat$ forms a proper set. Nonetheless, this definition still makes sense.  
\end{remark} 

\begin{remark} Metric limits of a Cauchy sequence of objects of $\Cat$ are not necessarily unique, even up to isomorphism. However, we have the following lemma
\end{remark}

\begin{lemma} Let $A$ be a metric limit of a Cauchy sequence $\{A_n\}$. Then $B$ is also a metric limit of the same Cauchy sequence if and only if $d(A,B) = 0$. 
\end{lemma}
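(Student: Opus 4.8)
The plan is to derive everything from the two formal properties of the interleaving distance recorded earlier in the excerpt: symmetry, $d(X,Y) = d(Y,X)$, and the triangle inequality $d(X,Z) \le d(X,Y) + d(Y,Z)$. The Cauchy hypothesis on $\{A_n\}$ plays no essential role in the argument beyond fixing the setting; the statement is really a general fact about how the property ``is a metric limit of $\{A_n\}$'' interacts with the pseudo-distance $d$, namely that this property is invariant under replacing a limit by any object at distance zero from it, and conversely forces any two limits to be at distance zero.

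For the ``if'' direction, I would assume $d(A,B) = 0$ and show $B$ is a metric limit. Given $\e > 0$, use that $A$ is a metric limit to get $N_\e$ with $d(A_n, A) < \e$ for all $n > N_\e$. Then for such $n$, the triangle inequality gives $d(A_n, B) \le d(A_n, A) + d(A,B) = d(A_n, A) < \e$, so the same threshold function $N_\e$ witnesses that $B$ is a metric limit of $\{A_n\}$.

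For the ``only if'' direction, I would assume $B$ is also a metric limit and show $d(A,B) = 0$. Fix $\e > 0$. Applying the metric limit property of $A$ with tolerance $\e/2$ and that of $B$ with tolerance $\e/2$, choose $N$ larger than both associated thresholds; then for a single index $n > N$ we simultaneously have $d(A_n, A) < \e/2$ and $d(A_n, B) < \e/2$. Using symmetry to write $d(A, A_n) = d(A_n, A)$ and then the triangle inequality, $d(A,B) \le d(A, A_n) + d(A_n, B) < \e/2 + \e/2 = \e$. Since $\e > 0$ is arbitrary and $d(A,B) \ge 0$, we conclude $d(A,B) = 0$.

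There is no real obstacle here; the proof is two applications of the triangle inequality. The only points requiring a moment's care are taking the maximum of the two thresholds in the ``only if'' direction so that one common index $n$ witnesses both estimates, and invoking symmetry to convert $d(A_n, A)$ into $d(A, A_n)$ before applying the triangle inequality. Both ingredients are exactly the properties of $d$ established in Section~\ref{sec:Background}.
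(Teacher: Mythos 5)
Your proof is correct and is essentially identical to the paper's: both directions are the same two applications of the triangle inequality (with your $\e/2$ bookkeeping versus the paper's $2\e$ being a cosmetic difference). No further comment is needed.
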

\begin{proof}
Say $d(A,B) = 0$. Then for all $\e$, there is an $N_\e$ so that for all $n > N_\e$, $d(A_n, A) < \e$. Then $d(A_n, B) \leq d(A_n, A) + d(A,B) = d(A_n, A) < \e$, so $B$ is a metric limit of $\{A_n\}$. \\

Say $B$ is another metric limit of $\{A_n\}$. There is some $N_\e$ so that $d(A_n, A) < \e$ and $d(A_n, B) < \e$ for all $n \geq N_\e$. Therefore, $d(A, B) \leq d(A_n, A) + d(A_n, B) \leq 2\e$. Since $\e$ was arbitrary, $d(A,B) = 0$. 
\end{proof} 

Let $\{\tilde{A}_n\}$ be a Cauchy sequence of elements in $\Cat$. Let $\e_k = 2^{-k}$. Let $N_k$ be a sequence of natural numbers so that $N_k < N_{k+1}$ and for all $n,m \geq N_k$, $d(\tilde{A}_n, \tilde{A}_m) < \e_k/2 = \e_{k+1}$. Set $A_k = \tilde{A}_{N_k}$.

\begin{lemma}\label{lemma:subsequence} $\{A_k\}$ is a Cauchy sequence, and if $A$ is a metric limit of $\{\tilde{A}_k\}$ if and only if $A$ is a metric limit of $\{A_k\}$. 
\end{lemma}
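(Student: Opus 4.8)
The plan is to prove the two assertions of Lemma \ref{lemma:subsequence} separately: first that $\{A_k\}$ is Cauchy, then that the metric-limit relation transfers both ways between $\{A_k\}$ and $\{\tilde A_k\}$. Throughout I will lean only on the triangle inequality and symmetry of $d$ (the stated properties of the interleaving distance), since $d$ is an extended pseudo-quasi-metric but in fact symmetric here.

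\textbf{Step 1: $\{A_k\}$ is Cauchy.} Fix $\e > 0$ and choose $k_0$ with $\e_{k_0} = 2^{-k_0} < \e$. I claim that for all $j, k \geq k_0$ we have $d(A_j, A_k) < \e$. Indeed, $A_j = \tilde A_{N_j}$ and $A_k = \tilde A_{N_k}$, and since $N_j, N_k \geq N_{k_0}$ (using $N_k < N_{k+1}$, so the $N_k$ are increasing), the defining property of $N_{k_0}$ gives $d(\tilde A_{N_j}, \tilde A_{N_k}) < \e_{k_0}/2 < \e$. So $\{A_k\}$ is Cauchy.

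\textbf{Step 2: metric limits transfer.} Suppose $A$ is a metric limit of $\{\tilde A_k\}$. Given $\e > 0$, pick $M$ so that $d(\tilde A_n, A) < \e$ for all $n \geq M$. Since the $N_k$ are strictly increasing, there is $k_1$ with $N_{k_1} \geq M$; then for $k \geq k_1$ we get $N_k \geq N_{k_1} \geq M$, hence $d(A_k, A) = d(\tilde A_{N_k}, A) < \e$. So $A$ is a metric limit of $\{A_k\}$. Conversely, suppose $A$ is a metric limit of $\{A_k\}$. Given $\e > 0$, choose $k_2$ with $\e_{k_2} < \e/2$ and also large enough that $d(A_k, A) < \e/2$ for all $k \geq k_2$; in particular $d(\tilde A_{N_{k_2}}, A) < \e/2$. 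Now for any $n \geq N_{k_2}$, the defining property of $N_{k_2}$ gives $d(\tilde A_n, \tilde A_{N_{k_2}}) < \e_{k_2}/2 < \e/2$, so by the triangle inequality $d(\tilde A_n, A) \leq d(\tilde A_n, \tilde A_{N_{k_2}}) + d(\tilde A_{N_{k_2}}, A) < \e$. Hence $A$ is a metric limit of $\{\tilde A_k\}$.

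\textbf{Anticipated obstacle.} There is no genuine obstacle here — the argument is a routine diagram-chase-free estimate. The only point requiring a moment's care is the converse direction of Step 2: one must pass from control of $A$ relative to the \emph{subsequence} back to control relative to the \emph{whole} sequence, and this is exactly where the hypothesis that $\{\tilde A_k\}$ is Cauchy (built into the choice of the $N_k$) does the work — without it the tail of $\{\tilde A_k\}$ could wander away from $A$ between the sampled indices. One should also note that metric limits need not be unique (as the preceding remark observes), so the statement is genuinely an equivalence of the property "is a metric limit of", not an identification of a particular object.
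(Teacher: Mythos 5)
Your proof is correct and is exactly the argument the paper has in mind: the paper omits the details, remarking only that the proof is the standard one showing a subsequence of a Cauchy sequence is Cauchy with the same limits, and your Steps 1--2 spell out precisely that argument (using only symmetry and the triangle inequality for $d$, which the paper establishes for the interleaving distance). No discrepancies to report.
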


The proof of this lemma is the same as the proof that a subsequence of a Cauchy sequence in a metric space is Cauchy with the same limit, which can be found in any undergraduate analysis text.

\subsection{Categorical conditions implying metric completeness}

Take a Cauchy sequence $\{A_k\}$ so that $d(A_k, A_n) < \e_{k+1} = 2^{-(k+1)}$ for all $n > k$. For each $k \geq 1$, choose an $\e_{k+1}$-interleaving between $A_k$ and $A_{k+1}$. This gives us a map $$T_{\e_{k+1}}A_{k+1} \to T_{\e_{k+1}}T_{\e_{k+1}}A_{k+1} \to T_{2\e_{k+1}}A_k = T_{\e_k}A_k$$

Patching these together, we get the diagram

\begin{equation*}
\cdots \to T_{\e_{k+1}}A_{k+1} \to T_{\e_k}A_k \to \cdots \to T_{\e_2}A_2 \to T_{\e_1}A_1\label{eq:MainSequence}
\end{equation*}

If the categorical limit exists, set $A = \lim (\cdots \to T_{\e_{k+1}}A_{k+1} \to T_{\e_k}A_k \to \cdots \to T_{\e_2}A_2 \to T_{\e_1}A_1)$. Denote the limit maps $\phi_k: A \to  A_k$. 

\begin{theorem}\label{thm:CatLimitEqualsMetricLimit} Let $(\Cat, T_\e)$ be a category with a flow where $T_\e$ preserves categorical limits. Let $\{A_k\}$ be a sequence of objects so that $d(A_k, A_n) < \e_{k+1}$ for all $k$ and $n > k$. For all $k$, pick a weak $\e_{k+1}$-interleaving between $A_k$ and $A_{k+1}$, so you get a map $T_{\e_{k+1}}A_{k+1} \to T_{\e_k} A_k$. Let $A$ be a categorical limit of the diagram 
$$\cdots \to T_{\e_{k+1}}A_{k+1} \to T_{\e_k} A_k \to \cdots \to T_{\e_2}A_2 \to T_{\e_1} A_1$$

Then $A$ is a metric limit of $\{A_k\}$. 
\end{theorem}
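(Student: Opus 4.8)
The plan is to produce, for every $n$, a weak $\e_n$-interleaving between $A_n$ and $A$ (we may have to settle for scale $2\e_n$ or $3\e_n$, which is harmless); since $\e_n\to 0$ this yields $d(A_n,A)\to 0$, so $A$ is a metric limit of $\{A_k\}$. One of the two structure maps of the interleaving comes for free: the limit projection $\phi_n\colon A\to T_{\e_n}A_n$ serves as the map $A\to T_{\e_n}A_n$. The real content is then (i) a map $A_n\to T_{\e_n}A$ in the other direction, and (ii) the two commuting ``ladder'' cells of a weak interleaving.

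For (i) I would use the hypothesis that $T_{\e_n}$ preserves limits: then $T_{\e_n}A$ is the limit of the diagram obtained by applying $T_{\e_n}$ to $\cdots\to T_{\e_{k+1}}A_{k+1}\to T_{\e_k}A_k\to\cdots$, so to give a map $A_n\to T_{\e_n}A$ it suffices to give a cone over that diagram with apex $A_n$. For each $k$ I build the $k$-th leg by splicing together the chosen weak interleavings of consecutive terms: travelling ``up'' the sequence from $A_n$ to $A_k$ when $k\ge n$ (using the maps $A_j\to T_{\e_{j+1}}A_{j+1}$) or ``down'' when $k<n$ (using $A_{j+1}\to T_{\e_{j+1}}A_j$) produces a zigzag morphism out of $A_n$ carrying an accumulated shift, which I then repackage — using $\mu$, the unit $u$, the comparison transformations $T_{\de\le\e}$, and the first lemma of this section — as a morphism $g_k\colon A_n\to T_{\e_n}T_{\e_k}A_k$. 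Elementary estimates on the partial sums $\sum 2^{-j}$ are exactly what make the accumulated shifts fit where they must. The universal property of the limit then produces $g\colon A_n\to T_{\e_n}A$, provided the $g_k$ genuinely form a cone.

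Checking the cone condition and condition (ii) is where the flow axioms come in. Compatibility of the $g_k$ with the connecting maps reduces, after peeling off naturality of the $T_{\de\le\e}$'s and the coherence squares relating $\mu$, $u$ and $T_{\de\le\e}$, to the one identity built into each weak interleaving: the round trip $A_j\to T_{\e_{j+1}}A_{j+1}\to T_{\e_{j+1}}T_{\e_{j+1}}A_j\to T_{\e_j}A_j$ equals the structure map of $A_j$. The commutation cell of the interleaving ``on the $A$ side'' — that $\mu\circ T_{\e_n}g\circ\phi_n$ equals the structure map $A\to T_{2\e_n}A$ — can be verified cheaply, because $T_{2\e_n}A$ is again a limit (of $T_{2\e_n}$ applied to the diagram), so it is enough to check the identity after composing with each limit projection, which throws the computation back onto those same per-term triangle identities.

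The main obstacle, and the one place where one must actually grind, is the remaining cell, ``on the $A_n$ side'': $\mu\circ T_{\e_n}\phi_n\circ g$ should equal the structure map $A_n\to T_{2\e_n}A_n$, and $T_{2\e_n}A_n$ is not handed to us as a limit, so one cannot reduce to projections and must compute directly with $g$. The point is that $g$ was built precisely so that this is a telescoping computation: composing $g$ along the relevant leg collapses it to the cone component $g_n$ (and its neighbours), whose defining zigzag, fed back through $\phi_n$ and $\mu$, cancels stepwise via the weak-interleaving triangle identities and the coherence of $\mu$ and $u$, leaving the structure map. Making every cell commute on the nose — recalling that the flow need not be strict, so none of $\mu$, $u$, $T_{\de\le\e}$ may be discarded — is the bulk of the work; it is also the step where, if the accounting fails to close at scale $\e_n$, one simply enlarges the interleaving scale to a fixed multiple of $\e_n$, which costs nothing in the limit.
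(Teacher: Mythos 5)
Your proposal follows essentially the same route as the paper: build a cone $\{g_k\}$ out of $A_n$ by splicing the chosen weak interleavings, use the hypothesis that $T_{\e_n}$ preserves limits to obtain $g\colon A_n\to T_{\e_n}A$ from the universal property, and verify the two interleaving cells by reducing to limit projections where the target is a limit and to the per-term interleaving identities otherwise. The only (harmless) divergence is in the bookkeeping of difficulty: the $A_n$-side cell you flag as the main obstacle in fact collapses immediately, since $T_{\e_n}\phi_n\circ g$ is just the cone component $g_n$ by the universal property, while it is the $A$-side cell whose reduction to projections carries the substantial diagram chase in the paper's proof.
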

\begin{proof}We will show that $d(A_k, A) < \e_k$ by exhibiting a weak $\e_k$-interleaving between $A_k$ and $A$. This will give us that $A$ is a metric limit of $\{A_k\}$. We will actually give a strict $\e_k$-interleaving.\\

First, consider the following commutative diagram

\begin{center}
\begin{tikzpicture}[scale = 0.94, every node/.style={scale=0.94}]
\matrix (m) [matrix of math nodes,row sep=3em,column sep=2.5em,minimum width=2em]
{

A_k & T_{\e_k}A_k & T_0T_{\e_k}A_k & T_{\e_k}T_{\e_k} A_k\\
T_{\e_{k+1}}A_{k+1} & T_{\e_{k+1}}T_{\e_{k+1}} A_{k+1}& T_{\e_{k+1}}T_{\e_{k+1}}A_{k+1}&T_{\e_k}T_{\e_{k+1}} A_{k+1}\\
T_{\e_{k+1}}T_{\e_{k+2}}A_{k+2} & T_{\e_{k+1}}T_{\e_{k+2}}T_{\e_{k+2}} A_{k+2}& T_{\e_{k+1}+\e_{k+2}}T_{\e_{k+2}}A_{k+2} &T_{\e_k}T_{\e_{k+2}} A_{k+2}\\
T_{\e_{k+1}}T_{\e_{k+2}}T_{\e_{k+3}}A_{k+3} & T_{\e_{k+1}}T_{\e_{k+2}}T_{\e_{k+3}}T_{\e_{k+3}} A_{k+3}& T_{\e_{k+1}+\e_{k+2}+\e_{k+3}} T_{\e_{k+3}} A_{k+3}&T_{\e_k}T_{\e_{k+3}} A_{k+3}\\
};
\path[-stealth]
    (m-1-1) edge node [above] { } (m-1-2)
		    edge node [above] { } (m-2-1)
	(m-1-2) edge node [above] { } (m-1-3)
	(m-1-3) edge node [above] { } (m-1-4)
	(m-2-1) edge node [above] { } (m-2-2)
			edge node [above] { } (m-3-1)
			edge node [above] { } (m-1-2)
	(m-2-2) edge node [above] { } (m-2-3)
	(m-2-3) edge node [above] { } (m-2-4)
	(m-2-4) edge node [above] { } (m-1-4)
	(m-3-1) edge node [above] { } (m-3-2)	
			edge node [above] { } (m-2-2)
			edge node [above] { } (m-4-1)
	(m-3-2) edge node [above] { } (m-3-3)	    
	(m-3-3) edge node [above] { } (m-3-4)
	(m-3-4) edge node [above] { } (m-2-4)
	(m-4-1) edge node [above] { } (m-4-2)	
			edge node [above] { } (m-3-2)
	(m-4-2) edge node [above] { } (m-4-3)
	(m-4-3) edge node [above] { } (m-4-4)
	(m-4-4) edge node [above] { } (m-3-4);
            
\end{tikzpicture}
\end{center}

The first triangle we get from the weak $\e_{k+1}$-interleaving between $A_k$ and $A_{k+1}$, the second triangle we get from applying $T_{\e_{k+1}}$ to the weak $\e_{k+2}$-interleaving between $A_{k+1}$ and $A_{k+2}$, and so on. The trapezoids we get from the natural transformation $T_{\e_{k+1}}T_{\e_{k+2}} ... T_{\e_{k+m}} \Rightarrow T_{\e_k}$.\\

For $m \geq k$, denote by $\psi_m: A_k \to T_{\e_k}T_{\e_m}A_m$ the map which we get from this diagram. For $m < k$, denote by $\psi_m: A_k \to T_{\e_k}T_{\e_m}A_m$ the map which makes the rest of the following diagram commute:

\begin{center}
\begin{tikzpicture}
\matrix (m) [matrix of math nodes,row sep=3em,column sep=4em,minimum width=2em]
{

& & A_k &   \\
\cdots & T_{\e_k}T_{\e_{m+1}} A_{m+1}  & T_{\e_k}T_{\e_m}A_m & \cdots \\
};
\path[-stealth]
    (m-1-3) edge node[xshift=-10pt, yshift=0pt] [above] {$\psi_{m+1}$ } (m-2-2)
    		edge node [left] {$\psi_m$ } (m-2-3)
    (m-2-1) edge node [below] { }(m-2-2)
    (m-2-2) edge node [right] { } (m-2-3)
    (m-2-3) edge node [right] { } (m-2-4);
            
\end{tikzpicture}
\end{center}

The collection of maps $\psi_m$ induces a map $\psi: A_k \rightarrow T_{\e_k}A$, because $T_{\e_k}A$ is the limit of that bottom diagram.\\

Now we have the following maps:
\begin{itemize}
\item $\phi_k: A \to T_{\e_k}A_{\e_k}$. This comes from the definition of $A$ as a limit. 
\item $T_{\e_k}\phi_k: T_{\e_k}A \to T_{\e_k}T_{\e_k} A_k$. This comes from applying $T_{\e_k}$ to $\phi_i$. By hypothesis, it is also the limit map of $T_{\e_k}A = \lim\left(\cdots \to T_{\e_k}T_{\e_2}A_2 \to T_{\e_k}T_{\e_1}A_1\right)$.
\item $\psi: A_k \to T_{\e_k}A$, which we just defined.
\item $T_{\e_k}\psi: T_{\e_k}A_k \to T_{\e_k}T_{\e_k}A $. This comes from applying $T_{\e_k}$ to $\psi$. 
\end{itemize}

We must check that these pentagons commute:

\begin{center}
\begin{tikzpicture}
\matrix (m) [matrix of math nodes,row sep=3em,column sep=4em,minimum width=2em]
{
T_0 A_k   &A_k  &  \\
& &T_{\e_k} A \\
T_{2\e_k}A_k & T_{\e_k}T_{\e_k} A_k & \\
};
\path[-stealth]
    (m-1-1) edge node [left] { } (m-3-1)
    (m-1-2)	edge node[xshift=0pt, yshift=5pt] [right] {$\psi$ } (m-2-3)	
    		edge node [left] { } (m-1-1)
    (m-2-3) edge node[xshift=0pt, yshift=-7pt] [right] {$T_{\e_k}\phi_k$ } (m-3-2)
    (m-3-2) edge node [right] { } (m-3-1);
\end{tikzpicture}
\hspace{0.5in}
\begin{tikzpicture}
\matrix (m) [matrix of math nodes,row sep=3em,column sep=4em,minimum width=2em]
{
& A & T_0 A\\
T_{\e_k}A_k & &  \\
& T_{\e_k}T_{\e_k} A_k & T_{2\e_k} A_k\\
};
\path[-stealth]
   
    (m-1-2) edge node[xshift=0pt, yshift=7pt] [left] {$T_{\e_k}\psi$ } (m-2-1)	
    		edge node [right] { } (m-1-3)		
    (m-2-1) edge node[xshift=0pt, yshift=-5pt] [left] {$\phi_k$} (m-3-2)
    (m-3-2) edge node [left] { } (m-3-3)
    (m-1-3) edge node [left] { } (m-3-3);
            
\end{tikzpicture}
\end{center}

We will do this by showing the stronger statement that these triangles commute:

\begin{center}
\begin{tikzpicture}
\matrix (m) [matrix of math nodes,row sep=3em,column sep=4em,minimum width=2em]
{
A_k   &  & T_{\e_k}T_{\e_k}A_k \\
& T_{\e_k}A&\\
};
\path[-stealth]
    (m-1-1) edge node [left] { } (m-1-3)
    		edge node[xshift=-2pt, yshift=-3pt] [left] {$\psi$ } (m-2-2)		
    (m-2-2) edge node[xshift=5pt, yshift=-3pt] [right] {$T_{\e_k}\phi_k$ } (m-1-3);
\end{tikzpicture}
\hspace{0.5in}
\begin{tikzpicture}
\matrix (m) [matrix of math nodes,row sep=3em,column sep=4em,minimum width=2em]
{
 & T_{\e_k}A_k  &  \\
A & & T_{\e_k}T_{\e_k}A\\
};
\path[-stealth]
   
    (m-1-2) edge node[xshift=-3pt, yshift=6pt] [right] {$T_{\e_k}\psi$ } (m-2-3)		
    (m-2-1) edge node[xshift=3pt, yshift=6pt] [left] {$\phi_k$} (m-1-2)
		    edge node [left] { } (m-2-3);
            
\end{tikzpicture}
\end{center}

The first commutes by the definition of $\psi$. The second triangle is harder. Notice we have the commutative diagram

\begin{center}
\begin{tikzpicture}[scale = 0.94, every node/.style={scale=0.94}]
\matrix (m) [matrix of math nodes,row sep=3em,column sep=4em,minimum width=2em]
{
T_{\e_k}A_k & & & T_{\e_k}A_k \\
T_{\e_{k+1}}T_{\e_{k+1}}A_k & T_{\e_{k+1}}T_{\e_{k+1}}T_{\e_{k+1}}A_{k+1} & T_{\e_{k}}T_{\e_{k+1}}A_{k+1} &\\
T_{\e_{k+1}}A_{k+1} &   &   &T_{\e_k}T_{\e_{k+1}}A_{k+1}\\
T_{\e_{k+2}}T_{\e_{k+2}}A_{k+1} & T_{\e_{k+2}}T_{\e_{k+2}}T_{\e_{k+2}}A_{k+2}&  T_{\e_{k+1}}T_{\e_{k+2}}A_{k+2} &\\
T_{\e_{k+2}}A_{k+2} &     &  &T_{\e_k}T_{\e_{k+1}}T_{\e_{k+2}}A_{k+2}\\
};
\path[-stealth]

	(m-1-1) edge node [right] {  } (m-1-4)	
			edge node [left] { } (m-2-3)
	(m-1-4) edge node[xshift=-50pt, yshift=20pt] [left] {$\textbf{D}$ } (m-3-4)	
    (m-2-1) edge node[xshift=50pt, yshift=-5pt] [right] {$\textbf{A}$ } (m-1-1)
		    edge node [left] { } (m-2-2)
	(m-2-2) edge node [left] { } (m-2-3)
	(m-3-1) edge node[xshift=50pt, yshift=5pt] [right] {$\textbf{B}$ } (m-2-1)
		    edge node [left] { } (m-2-3)
		    edge node[xshift=75pt, yshift=15pt] [above] { $\textbf{C}$} (m-3-4)
		    edge node [left] { } (m-4-3)
    (m-2-3) edge node [left] { } (m-3-4)
    (m-3-4) edge node [left] { } (m-5-4)
    (m-4-1) edge node [left] { } (m-3-1)
    		edge node [left] { } (m-4-2)
    (m-5-1) edge node [left] { } (m-4-1)
    		edge node [left] { } (m-4-3)
    		edge node [left] { } (m-5-4)	
    (m-4-2) edge node [left] { } (m-4-3)
    (m-4-3) edge node [left] { } (m-5-4)   ;

;
\end{tikzpicture}
\end{center}

The $\textbf{A}$ triangles come from applying the natural transformations $T_{\e_{k+m+1}}T_{\e_{k+m+1}} \Rightarrow T_{\e_{k+m}}$ to the interleaving maps $A_{k+m} \to T_{\e_{k+m+1}}A_{k+m+1}$. The $\textbf{B}$ triangles come from the  weak $\e_{k+m+1}$-interleaving between $A_{k+m}$ and $A_{k+m+1}$. The horizontal maps are defined so that the $\textbf{C}$ triangles commute. Lastly, we know the $\mathbf{D}$ diagrams commute by applying the natural transformations $Id \Rightarrow T_{\e_k}T_{\e_{k+1}}...T_{\e_{k+m}}$ to the maps $A_{k+m} \to T_{\e_{k+m+1}}A_{k+m+1}$.\\

We can concatenate this diagram with $T_{\e_k}$ applied to an earlier diagram to get

\begin{center}
\begin{tikzpicture}
\matrix (m) [matrix of math nodes,row sep=3em,column sep=3em,minimum width=2em]
{

A & T_{\e_k}A_k & T_{\e_k}A_k & T_{\e_k}T_{\e_k}T_{\e_k} A_k\\
& T_{\e_{k+1}} A_{k+1} & T_{\e_k}T_{\e_{k+1}}A_{k+1} & T_{\e_k}T_{\e_k}T_{\e_{k+1}} A_{k+1}\\
& T_{\e_{k+2}} A_{k+2}& T_{\e_k}T_{\e_{k+1}}T_{\e_{k+2}}A_{k+2} A_{k+2}& T_{\e_k}T_{\e_k}T_{\e_{k+2}} A_{k+2}\\
};
\path[-stealth]
	(m-1-1) edge node [above] {$\phi_k$} (m-1-2)
			edge node [above] { } (m-2-2)
			edge node [above] { } (m-3-2)
	(m-1-2) edge node [above] { } (m-1-3)
	(m-2-2) edge node [above] { } (m-1-2)
		    edge node [above] { } (m-2-3)	 
	(m-3-2) edge node [above] { } (m-2-2)
		    edge node [above] { } (m-3-3)	 	       
    (m-1-3) edge node [above] { } (m-1-4)
		    edge node [above] { } (m-2-3)
	(m-2-3) edge node [above] { } (m-2-4)
			edge node [above] { } (m-3-3)
	(m-2-4) edge node [above] { } (m-1-4)
	(m-3-3) edge node [above] { } (m-3-4)	
	(m-3-4) edge node [above] { } (m-2-4);
            
\end{tikzpicture}
\end{center}

This diagram tells us $T_{\e_k}T_{\e_k}T_{\e_m}\phi_m = T_{\e_k}\psi_m\circ \phi_k$. These maps induce a map from $A$ to $T_{\e_k}T_{\e_k}A$. By continuity of $T_{\e_k}$, $T_{\e_k}T_{\e_k}T_{\e_m}\phi_m$ induces the map $T_{\e_k}T_{\e_k}T_{\e_m}\phi$, and $T_{\e_k}\psi_m\circ \phi_k$ induces the map $T_{\e_k}\psi\circ \phi_k$. Therefore, our second triangle commutes.\\

This shows there is an $\e_k$-interleaving between $A_k$ and $A$, and therefore $d(A_k, A) < \e_k$. Thus, $A$ is a metric limit of $\{A_k\}$. 
\end{proof}

\begin{theorem}\label{thm:CategorialConditionsForCompleteness}A category with a flow $(\Cat, T_\e)$ is metrically complete if
\begin{itemize}
\item $\Cat$ contains all limits of the form $\cdots\to\bullet \to \bullet \to \bullet$, and
\item for all $\e > 0$, $T_\e$ preserves categorical limits.
\end{itemize}

Dually, a category with a coflow $(\Cat, T_\e)$ is metrically complete if 
\begin{itemize}
\item $\Cat$ contains all colimits of the form $\bullet \to \bullet \to \bullet\to \cdots $, and
\item for all $\e > 0$, $T_\e$ preserves (categorical) colimits.
\end{itemize}
\end{theorem}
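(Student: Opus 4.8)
The plan is to reduce the statement to Theorem \ref{thm:CatLimitEqualsMetricLimit}, which already carries all the real content; the rest is bookkeeping plus a formal dualization.

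First I would take an arbitrary Cauchy sequence $\{\tilde{A}_n\}$ in $\Cat$ and pass to the subsequence $\{A_k\}$ constructed immediately before Lemma \ref{lemma:subsequence}, so that $d(A_k, A_n) < \e_{k+1}$ whenever $n > k$. By Lemma \ref{lemma:subsequence} it is enough to exhibit a metric limit of $\{A_k\}$. For each $k$ I would choose a weak $\e_{k+1}$-interleaving between $A_k$ and $A_{k+1}$ and extract, exactly as in the discussion preceding Theorem \ref{thm:CatLimitEqualsMetricLimit}, a map $T_{\e_{k+1}}A_{k+1} \to T_{\e_k}A_k$; patching these together yields a diagram of shape $\cdots \to \bullet \to \bullet \to \bullet$. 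The first hypothesis guarantees this diagram has a categorical limit $A$ in $\Cat$, and the second hypothesis says every $T_\e$ preserves categorical limits, so Theorem \ref{thm:CatLimitEqualsMetricLimit} applies verbatim and gives that $A$ is a metric limit of $\{A_k\}$. Lemma \ref{lemma:subsequence} then upgrades this to a metric limit of $\{\tilde{A}_n\}$, so $\Cat$ is metrically complete.

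For the dual statement I would argue purely formally. If $(\Cat, T_\e)$ is a category with a coflow, then by definition $(\Cat^{op}, T_\e^{op})$ is a category with a flow, and as recorded in Section \ref{sec:Background} the weak $\e$-interleaving relation, the interleaving distance, Cauchy sequences, and metric limits are all insensitive to passing to the opposite category. A colimit in $\Cat$ of shape $\bullet \to \bullet \to \bullet \to \cdots$ is precisely a limit in $\Cat^{op}$ of shape $\cdots \to \bullet \to \bullet \to \bullet$, and $T_\e$ preserving such colimits in $\Cat$ is precisely $T_\e^{op}$ preserving the corresponding limits in $\Cat^{op}$. Hence the two hypotheses on $(\Cat, T_\e)$ become exactly the two hypotheses of the first part applied to $(\Cat^{op}, T_\e^{op})$, which is therefore metrically complete; since metric completeness is a statement about the interleaving distance, $(\Cat, T_\e)$ is metrically complete as well.

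The only point requiring care — not a real obstacle — is making sure the dualization is legitimate as a black box, i.e. that one may invoke the first part rather than re-running the argument with colimits: every ingredient used in the proof of Theorem \ref{thm:CatLimitEqualsMetricLimit} (interleaving diagrams, the flow structure maps $\mu$ and $u$, the factorization lemma, and the universal property of limit cones) dualizes cleanly because the paper has defined categories with coflows as flows on the opposite category. I would state this explicitly so the reader does not expect a separate colimit computation in place of the limit one.
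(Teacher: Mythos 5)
Your proposal is correct and matches the paper's proof essentially verbatim: both reduce to Theorem \ref{thm:CatLimitEqualsMetricLimit} via the subsequence of Lemma \ref{lemma:subsequence} and the diagram built from the chosen weak interleavings, then dispose of the coflow case by formal dualization. Your extra remarks justifying the legitimacy of the dualization are more explicit than the paper's one-line ``dualize this,'' but they do not change the argument.
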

\begin{proof}
We show the proof for the category with a flow; dualize this to get a proof for a category with a coflow. \\

Let $\{\tilde{A}_k\}$ be a Cauchy sequence in $(\Cat, T_\e)$. As described before, there is a subsequence $\{A_k\}$ so that $d(A_k, A_n) < \e_{k+1}$ for all $n > k$. Then we can create a diagram 

$$\cdots \to T_{\e_{k+1}}A_{k+1} \to T_{\e_k} A_k \to \cdots \to T_{\e_2}A_2 \to T_{\e_1} A_1$$

$\Cat$ has categorical limits of all diagrams of this form; denote this limit $A$. Then by Theorem \ref{thm:CatLimitEqualsMetricLimit}, $A$ is a metric limit of $\{A_k\}$, and by Lemma \ref{lemma:subsequence}, $A$ is also a metric limit of $\{\tilde{A}_k\}$. \\

Thus, every Cauchy sequence has a limit, and $(\Cat, T_\e)$ is metrically complete. 
\end{proof}

\section{Metric Completions}\label{sec:Completions}
In this section, we will work with categories with a coflow for convenience's sake. The results we reference from outside sources are more familiar in the coflow case. The metric completion of a category with a flow can be found by dualizing and completing the resulting category with a coflow.\\

\begin{lemma}\label{lemma:definitionOfCompletion}Let $(\Cat, T_\e)$ and $(\mathcal{D}, S_\e)$ be categories with coflows, and $(\Cat, T_\e)$ be a strict coflow-equivariant, full subcategory of $(\mathcal{D}, S_\e)$. Let $T_\e$ be a strict flow, $\mathcal{D}$ be cocomplete, and $S_\e$ preserve colimits. Then there is a full subcategory $\mathcal{Z}$ of $\mathcal{D}$ so that
\begin{enumerate}
\item $\Cat$ is a full subcategory of $\mathcal{Z}$.
\item $\Cat$ is metrically dense in $\mathcal{Z}$; i.e. for every object $Z\in \mathcal{Z}$ and real number $\e > 0$, there is an object $A \in \Cat$ so that $d(A,Z) < \e$. 
\item $S_\e$ preserves $\mathcal{Z}$. 
\item $(\mathcal{Z}, S_\e)$ is metrically complete. 
\end{enumerate}
\end{lemma}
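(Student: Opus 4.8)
The plan is to take $\mathcal{Z}$ to be the \emph{metric closure} of $\Cat$ inside $\mathcal{D}$: the full subcategory whose objects are those $Z\in\mathcal{D}$ with $\inf_{A\in\Cat}d(A,Z)=0$, equivalently those $Z$ that occur as a metric limit of some Cauchy sequence drawn from $\Cat$ (given such a $Z$, pick $A_k\in\Cat$ with $d(A_k,Z)<2^{-k}$; this sequence is automatically Cauchy and has $Z$ as a metric limit, and conversely). With this definition, (1) and (2) are immediate: every $A\in\Cat$ satisfies $d(A,A)=0$, so $\Cat$ is a (full) subcategory of $\mathcal{Z}$, and metric density of $\Cat$ in $\mathcal{Z}$ is precisely the defining condition.

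For (3) I would use two things. First, since the inclusion $\iota\colon(\Cat,T_\e)\hookrightarrow(\mathcal{D},S_\e)$ is strict coflow-equivariant we have $S_\e\iota=\iota T_\e$; in particular $S_\e$ carries $\Cat$ into itself, acting there as $T_\e$, so $S_\e A\in\Cat$ whenever $A\in\Cat$. Second, I use that each $S_\e$ is $1$-Lipschitz for the interleaving distance on $\mathcal{D}$ — a feature of categories with a coflow which, for a strict coflow, is the Soft Stability Theorem applied to $S_\e$ as a coflow-equivariant endofunctor, and in general follows from the stability machinery of \cite{MunchCatsFlow}. Granting this, for $Z\in\mathcal{Z}$ and $A_k\in\Cat$ with $d(A_k,Z)\to0$ the objects $S_\e A_k=T_\e A_k$ lie in $\Cat$ and satisfy $d(S_\e A_k,S_\e Z)\le d(A_k,Z)\to0$, so $S_\e Z\in\mathcal{Z}$. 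In particular $(\mathcal{Z},S_\e)$ is a bona fide category with a coflow, so statement (4) even makes sense.

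For (4), let $\{Z_k\}$ be a Cauchy sequence in $\mathcal{Z}$. After passing to a subsequence (Lemma~\ref{lemma:subsequence}) I would invoke metric density (2) to choose $A_k\in\Cat$ with $d(A_k,Z_k)$ small enough that $\{A_k\}$ is a Cauchy sequence with $d(A_k,A_n)<\e_{k+1}$ for $n>k$. Applying the coflow analogue of Theorem~\ref{thm:CatLimitEqualsMetricLimit} — legitimate because $\mathcal{D}$ is cocomplete and $S_\e$ preserves colimits — I build the diagram $S_{\e_1}A_1\to S_{\e_2}A_2\to\cdots$ out of weak interleavings, form its colimit $A$ in $\mathcal{D}$, and conclude that $A$ is a metric limit of $\{A_k\}$ with $d(A_k,A)<\e_k$. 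Since $A$ is approximated by the objects $A_k\in\Cat$ it lies in $\mathcal{Z}$, and $d(A,Z_k)\le d(A,A_k)+d(A_k,Z_k)\to0$ shows $A$ is a metric limit of the subsequence, hence of $\{Z_k\}$ by Lemma~\ref{lemma:subsequence}. Thus $(\mathcal{Z},S_\e)$ is metrically complete.

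The main obstacle I anticipate is condition (3): that the metric closure is stable under $S_\e$. This rests entirely on $S_\e$ being $1$-Lipschitz on $\mathcal{D}$ together with strict coflow-equivariance of the inclusion keeping $\Cat$ invariant, and a fully self-contained treatment would have to verify the $1$-Lipschitz property with the coherence data $\mu,u$ of the coflow in hand. A secondary point requiring care is that the colimits used in the completeness argument are taken in $\mathcal{D}$, not a priori in $\mathcal{Z}$; this is harmless precisely because any such colimit, by the coflow analogue of Theorem~\ref{thm:CatLimitEqualsMetricLimit}, is a metric limit of a sequence from $\Cat$ and therefore lands back in $\mathcal{Z}$, so $\mathcal{Z}$ never needs to be assumed cocomplete.
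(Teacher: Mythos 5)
Your construction of $\mathcal{Z}$ is genuinely different from the paper's: you take the metric closure of $\Cat$ in $\mathcal{D}$, namely all $Z$ with $\inf_{A\in\Cat}d(A,Z)=0$, whereas the paper adjoins to $\Cat$ only the categorical colimits in $\mathcal{D}$ of the diagrams $T_{\e_1}A_1\to T_{\e_2}A_2\to\cdots$ built from interleavings of Cauchy sequences in $\Cat$. Your (1) and (2) are immediate, and your (4) is essentially the paper's own argument: approximate $\{Z_k\}$ by a Cauchy sequence in $\Cat$, apply the coflow version of Theorem \ref{thm:CatLimitEqualsMetricLimit} in the cocomplete category $\mathcal{D}$, and note that the resulting colimit is metrically approximated by objects of $\Cat$ and hence lies in $\mathcal{Z}$. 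Your larger $\mathcal{Z}$ would, if everything worked, directly produce the closure of Definition \ref{def:Closure} rather than just one admissible subcategory, which is a genuine payoff.

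The gap is exactly where you flagged it: condition (3). Your argument requires $S_\e$ to be $1$-Lipschitz (or at least uniformly continuous) on all of $\mathcal{D}$, and under the stated hypotheses this is not available. Only $T_\e$ on $\Cat$ is assumed strict; $S_\e$ on $\mathcal{D}$ is an arbitrary coflow that preserves colimits. For a non-strict (co)flow there is no coherence datum comparing $S_\e S_\zeta$ with $S_\zeta S_\e$ (each is related to $S_{\e+\zeta}$ by the structure maps, but not to the other), so $S_\e$ is not a strict --- nor even lax --- coflow-equivariant endofunctor of $(\mathcal{D},S_\e)$, and the Soft Stability Theorem does not apply to it: from a weak $\zeta$-interleaving map involving $A$ and $S_\zeta B$ you can apply $S_\e$ to obtain a map involving $S_\e S_\zeta B$, but the interleaving of $S_\e A$ with $S_\e B$ needs $S_\zeta S_\e B$, and there is no way to pass between them. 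The paper sidesteps this entirely: since every non-$\Cat$ object of its $\mathcal{Z}$ is a colimit of a diagram whose terms $T_{\e_k}A_k$ lie in $\Cat$, applying $S_\e$ and using colimit-preservation reduces (3) to the identity $S_\e T_{\e_k}A_k = T_{\e_k}(T_\e A_k)$ on $\Cat$, which follows from strict equivariance of the inclusion together with strictness of $T$. To repair your version you would either need to add a hypothesis guaranteeing $S_\e$ is $1$-Lipschitz (e.g. that $S$ is strict, or that the $S_\e$ commute up to coherent isomorphism), or restrict $\mathcal{Z}$ to the colimit objects as the paper does.
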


\begin{proof}
Let $\{\tilde{A}_k\}$ be a Cauchy sequence in $\mathcal{C}$, and let $\{A_k\}$ be a subsequence so that $d(A_n,A_m) < \e_{k+1}$ for all $n,m \geq k$, as in section \ref{sec:Main}. Then we get as before a diagram 
\begin{equation*}
T_{\e_1}A_1 \to T_{\e_2}A_2 \to \cdots \to T_{\e_k}A_k \to T_{\e_{k+1}}A_{k+1}\to\cdots 
\end{equation*}

We can consider this a diagram in $\mathcal{D}$. It has a colimit $A$ in $\mathcal{D}$, because that category has all small colimits.\\

Let the objects of $\mathcal{Z}$ be the objects of $\mathcal{C}$ along with any object of $\mathcal{D}$ which is a colimit $A$ of a diagram of the form given above. Then $\mathcal{Z}$ is defined to be the full subcategory of $\mathcal{D}$ with that collection of objects.\\

We will now show that $\mathcal{Z}$ satisfies the properties of this corollary. (1) and (2) follow directly from the definition. To show (3), first note that if $Z$ is an object of $\mathcal{Z}$, $Z$ is the colimit of some diagram 
\begin{equation*}
T_{\e_1}A_1 \to T_{\e_2}A_2 \to \cdots \to T_{\e_k}A_k \to T_{\e_{k+1}}A_{k+1}\to\cdots 
\end{equation*}
of the type above. Then $S_\e Z$ is the colimit of the diagram 

\begin{equation*}
S_\e T_{\e_1}A_1 \to S_\e T_{\e_2}A_2 \to \cdots \to S_\e T_{\e_k}A_k \to S_\e T_{\e_{k+1}}A_{k+1}\to\cdots 
\end{equation*}

Because the embedding preserves the flow, this diagram is equivalent to 

\begin{equation*}
T_\e T_{\e_1}A_1 \to T_\e T_{\e_2}A_2 \to \cdots \to T_\e T_{\e_k}A_k \to T_\e T_{\e_{k+1}}A_{k+1}\to\cdots 
\end{equation*}

which by strictness of $T_\e$ is the same as 

\begin{equation*}
T_{\e_1}\left(T_\e A_1\right) \to T_{\e_2}\left(T_\e A_2\right) \to \cdots \to T_{\e_k}\left(T_\e A_k\right) \to T_{\e_{k+1}}\left( T_\e A_{k+1}\right)\to\cdots 
\end{equation*}

Therefore, $S_\e Z$ is also an object of $\mathcal{Z}$, and we've shown (3). \\

Lastly, we'll show (4). Take a Cauchy sequence $\{Z_k\}$ of objects of $\mathcal{Z}$. Then by (3), for each $Z_k$ there is an object $A_k$ in $\mathcal{C}$ so that $\ds d(A_k, Z_k) < 2^{-k}$. Then $\{A_k\}$ is also a Cauchy sequence with the same set of metric limits as $\{Z_k\}$. Because $\{A_k\}$ consists of objects of $\mathcal{C}$, it has a metric limit in $\mathcal{Z}$. Therefore, $\{Z_k\}$ also has a metric limit in $\mathcal{Z}$.  
\end{proof}

\begin{definition}\label{def:Closure} We denote the largest subcategory satisfying the properties of Lemma \ref{lemma:definitionOfCompletion}  as the \emph{closure of $(\Cat, T_\e)$ in $(\mathcal{D},S_\e)$}.  
\end{definition}
\begin{proof}This is one of those definitions that requires proof. Let $A$ be an object of $\mathcal{X}$ if and only if $A$ is an object of a subcategory $\mathcal{Z}$ of $\mathcal{D}$ satisfying the conditions of Lemma \ref{lemma:definitionOfCompletion}; Let $\mathcal{X}$ be the full subcategory with this class of objects. It is clear that any category that satisfies the conditions of Lemma \ref{lemma:definitionOfCompletion} must be a subcategory of $\mathcal{X}$. We will show $\mathcal{X}$ also satisfies those conditions. \\

Conditions 1, 2, and 3 are easy to verify. Condition 4 can be shown as follows. Take a Cauchy sequence $\{A_k\} \subset \mathcal{X}$. By condition 2, for each $k$ there is an object $B_k \in \Cat$ so that $d(A_k, B_k) \leq \frac{1}{k}$. Now $\{B_k\}$ is a Cauchy sequence of objects in $\Cat$, and an object $A\in \mathcal{D}$ is a metric limit of $\{A_k\}$ if and only if it is a metric limit of $\{B_k\}$. Since Lemma \ref{lemma:definitionOfCompletion} says there is a subcategory $\mathcal{Z}$ which is metrically complete and contains each $B_k$, $\{B_k\}$ has a metric limit $A \in \mathcal{Z} \subseteq \mathcal{X}$. Thus, $\{A_k\}$ has a metric limit in $\mathcal{X}$, and $\mathcal{X}$ is metrically complete.   
\end{proof}

Let $\Cat$ be a locally small category (i.e. for every pair of objects $A,B \in \Cat$, $Hom(A,B)$ is a proper set). Let $h_{-}$ be the Yoneda functor which takes an object $A$ in $\Cat$ and sends it to a contravariant functor from $\Cat$ to $\mathbf{Set}$ like so:
$$h_- : A \mapsto h_A, \text{ where } h_A(B) = Hom(B,A)$$

The Yoneda Lemma tells us that the natural transformations between the functors $h_A$ and $h_B$ are in one-to-one correspondence with the morphisms between $A$ and $B$. In other words, $Nat(h_A, h_B) = Hom(A,B)$. \\

One interpretation of this is that $\mathcal{C}$ can be embedded as a full subcategory of $\left[\Cat^{op},\mathbf{Set}\right]$, the category of contravariant functors from $\Cat$ to $\mathbf{Set}$.\footnote{This is sometimes called the category of set-valued presheaves on $\Cat$.} This is called the Yoneda embedding.\footnote{ Information on the Yoneda embedding and on category theory more generally can be found in \cite{Riehl} and \cite{MacLane}.}

\begin{lemma}\label{lemma:YonedaFacts} Assume $(\Cat, T_\e)$ is a small category. Then
\begin{enumerate}
\item $\left[\Cat^{op},\mathbf{Set}\right]$ is locally small.
\item $\left[\Cat^{op},\mathbf{Set}\right]$ is complete and cocomplete, meaning it contains all small limits and small colimits. In particular, it contains all colimits of diagrams of the form $\bullet \to \bullet \to \bullet \to \cdots$.
%\item The Yoneda embedding $h_-: \Cat \to \left[\Cat^{op},\mathbf{Set}\right]$ is (categorically) continuous, meaning it preserves small limits.
\item For every $\e > 0$, there is a pointwise left Kan extension $L_\e  = \text{\emph{Lan}}_{h_-} h_-\circ T_\e$ 
\begin{center}
\begin{tikzpicture}
\matrix (m) [matrix of math nodes,row sep=3em,column sep=4em,minimum width=2em]
{
\Cat & \Cat & \left[\Cat^{op},\mathbf{Set}\right]\\
\left[\Cat^{op},\mathbf{Set}\right]\\
};
\path[-stealth]
    (m-1-1) edge node [above] {$T_\e$ } (m-1-2)
    		edge node [left] {$h_-$ } (m-2-1)
    (m-1-2) edge node [above] {$h_-$ } (m-1-3)	
    (m-2-1) edge[dashed] node [below] {$L_\e$ } (m-1-3);
            
\end{tikzpicture}
\end{center}
and we can choose $L_\e$ so that $L_\e\circ h_- = h_-\circ T_\e$. 
\item $L_\e$ preserves all small colimits. 
\item $L_\e$ has a right adjoint. 
\item $L_0 = Id_{[\Cat^{op}, \mathbf{Set}]}$.
\end{enumerate}
\end{lemma}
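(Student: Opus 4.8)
The plan is to verify the six items essentially in the order listed, leaning on standard facts about presheaf categories and Kan extensions (as in \cite{Riehl}, \cite{MacLane}) and only doing real work for items (3)--(5).

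For (1), since $\Cat$ is small, $\Cat^{op}$ is small, so $\left[\Cat^{op},\mathbf{Set}\right]$ is a functor category from a small category to the locally small category $\mathbf{Set}$; hence it is locally small, because a natural transformation between two fixed presheaves is determined by a set-indexed family of functions and thus forms a set. For (2), limits and colimits in $\left[\Cat^{op},\mathbf{Set}\right]$ are computed pointwise, and $\mathbf{Set}$ is complete and cocomplete, so $\left[\Cat^{op},\mathbf{Set}\right]$ is complete and cocomplete; in particular it has all countable colimits, which covers the diagrams $\bullet\to\bullet\to\bullet\to\cdots$ needed in Section \ref{sec:Completions}. For (6), $T_0$ is (at least) naturally isomorphic to $Id_\Cat$; the construction in (3) will be chosen so that $L_0\circ h_- = h_-\circ T_0 = h_-$, and since every presheaf is a colimit of representables (the co-Yoneda/density formula) and $L_0$ preserves colimits by (4), $L_0$ agrees with the identity on a colimit-dense subcategory and preserves colimits, hence $L_0\cong Id_{[\Cat^{op},\mathbf{Set}]}$; for a strict flow one gets equality on the nose.

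The substance is in (3), (4), (5). For (3): because $\Cat$ is small, $\left[\Cat^{op},\mathbf{Set}\right]$ is cocomplete, and $h_-$ is fully faithful, the pointwise left Kan extension $L_\e := \mathrm{Lan}_{h_-}(h_-\circ T_\e)$ exists (this is the standard existence theorem for pointwise left Kan extensions along a functor out of a small category into a cocomplete category), and fully-faithfulness of $h_-$ gives the usual consequence that the Kan extension restricts back to the original functor along $h_-$, i.e. $L_\e\circ h_-\cong h_-\circ T_\e$; choosing a specific representative of the colimit defining $L_\e$ on representables lets us upgrade this to the equality $L_\e\circ h_- = h_-\circ T_\e$ as claimed. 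For (4): a left Kan extension along a functor from a small category into a cocomplete category, computed as a colimit, preserves all colimits that exist; more structurally, $L_\e$ is (up to the identification above) the unique colimit-preserving extension of $h_-\circ T_\e$ along the free-cocompletion embedding $h_-$, and such "nerve--realization" functors always preserve small colimits. For (5): once $L_\e$ is a colimit-preserving functor between locally presentable (in fact presheaf) categories, it has a right adjoint; one can either invoke the adjoint functor theorem, or write the right adjoint explicitly by the standard formula $R_\e(Y)(c) = \Hom_{[\Cat^{op},\mathbf{Set}]}\bigl(L_\e(h_c),\,Y\bigr)$, i.e. the "singular functor" associated to $L_\e\circ h_-$.

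The main obstacle I expect is not any single deep step but rather the bookkeeping needed to pin down $L_\e$ as an honest functor — not merely up to isomorphism — so that the later strictness arguments in Lemma \ref{lemma:definitionOfCompletion} (where $T_\e$ is assumed strict and one wants $L_\e\circ h_- = h_-\circ T_\e$ and $L_0 = Id$ literally) go through. This requires being careful about the choice of colimits representing $L_\e$ on representables and checking that the induced comparison maps are identities in the strict case; everything else is an assembly of textbook facts about presheaf categories and pointwise Kan extensions.
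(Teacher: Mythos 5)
Your proposal is correct and follows essentially the same route as the paper's proof: pointwise computation of (co)limits in the presheaf category for (1)--(2), existence of the pointwise left Kan extension along the fully faithful Yoneda embedding (chosen to be a genuine extension) for (3), the standard colimit-preservation property of such extensions for (4), the adjoint functor theorem for locally presentable categories for (5), and density/codensity of Yoneda for (6). Your extra care about pinning down $L_\e$ strictly (not just up to isomorphism) and about needing strictness of the flow to get $L_0 = Id$ on the nose is a reasonable refinement of the same argument, not a different one.
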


\begin{proof}
(1) follows directly from the Yoneda Lemma. Limits and colimits in functor categories are computed ``pointwise'', so (2) follows from the completion and cocompletion of $\mathbf{Set}$. The diagram above satisfies the conditions of Corollary 6.2.6 in \cite{Riehl}, which says the pointwise left Kan extension exists; that we can choose it to be a true extension is indicated in Corollary 4 in Section X.3 of \cite{MacLane}. (4) is a property of any pointwise left Kan extension along Yoneda; see Section 2.7 of \cite{KSCategoriesAndSheaves}. Further, because $[\Cat^{op}, \mathbf{Set}]$ is locally presentable, (5) follows from (4) by the Adjoint Functor Theorem. (6) is a famous result; another way this is said is that the Yoneda embedding is (categorically) codense (see Section X.6 in \cite{MacLane}).
\end{proof}

\begin{theorem} Assume $(\Cat, T_\e)$ is a category with a strict flow, and define the functors $L_\e$ as in Lemma \ref{lemma:YonedaFacts}. Then
\begin{enumerate}
\item $L_\e$ defines a coflow on $\left[\Cat^{op},\mathbf{Set}\right]$. 
\item $\left(\Cat, T_\e\right) \xrightarrow{h_-} \left(\left[\Cat^{op},\mathbf{Set}\right], L_\e\right)$ is a full, strict coflow-equivariant embedding of categories. 
\item $\left(\left[\Cat^{op},\mathbf{Set}\right], L_\e\right)$ is a metrically complete category.
\end{enumerate}
\end{theorem}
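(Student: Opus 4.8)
Write $\hat{\Cat} := [\Cat^{op},\mathbf{Set}]$. The plan is to derive all three claims from Lemma \ref{lemma:YonedaFacts}, whose six clauses furnish exactly the raw material: $\hat{\Cat}$ is locally small and (co)complete, each $L_\e$ restricts along $h_-$ to $h_-\circ T_\e$, is singled out by the fact that it preserves all small colimits, has a right adjoint, and $L_0 = Id_{\hat{\Cat}}$. The organizing principle I would use throughout is that a colimit-preserving endofunctor of $\hat{\Cat}$ --- and, likewise, a natural transformation between two such --- is determined by its restriction along the Yoneda embedding, since $\hat{\Cat}$ is the free cocompletion of $\Cat$; this is what lets the whole flow structure of $T_\e$ be transported up to $\hat{\Cat}$ and makes the coherence axioms automatic.

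To prove (1), I would first pin down the strictness-type identities. That $L_0 = Id_{\hat{\Cat}}$ is clause (6), and $L_\e L_\de = L_{\e+\de}$ holds because both sides are colimit-preserving and both restrict along $h_-$ to $h_-\circ T_{\e+\de}$: indeed $L_\e L_\de h_- = L_\e h_- T_\de = h_- T_\e T_\de = h_- T_{\e+\de} = L_{\e+\de} h_-$, using clause (3) twice and the strictness of $T_\e$. (If one only obtains a natural isomorphism here, that is still enough: the resulting coflow will be essentially strict.) For the order data, I would whisker with $h_-$ the structure transformation of $T_\e$ relating $T_\de$ and $T_\e$ for $\de\le\e$, then push it through $\mathrm{Lan}_{h_-}$ to get a $2$-cell relating $L_\de$ and $L_\e$ that restricts to it; functoriality in the parameter and compatibility with composition follow from the uniqueness clause of the universal property of $\mathrm{Lan}_{h_-}$. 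Finally, each coherence diagram demanded of a coflow --- the unit triangles, the associativity square for the multiplication, the compatibility of the multiplication with the order maps --- is a diagram of $2$-cells between colimit-preserving endofunctors of $\hat{\Cat}$ whose whiskering with $h_-$ is the corresponding diagram for $T_\e$; since all functors in sight preserve colimits, commutativity on the representables (which holds because it holds for $T_\e$) forces commutativity everywhere.

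Part (2) should then be little more than a summary of how the structure on $\hat{\Cat}$ was built: $h_-$ is full and faithful by the Yoneda Lemma; clause (3) gives $h_-\circ T_\e = L_\e\circ h_-$ on the nose; and the remaining squares in the definition of a strict coflow-equivariant functor commute because their edges are the $h_-$-images of the corresponding comparisons for $T_\e$, which are identities since $T_\e$ is strict. Part (3) is immediate from the coflow half of Theorem \ref{thm:CategorialConditionsForCompleteness}: $\hat{\Cat}$ has all colimits of sequences $\bullet\to\bullet\to\bullet\to\cdots$ because it is cocomplete (clause (2)), and each $L_\e$ preserves those colimits because it preserves all small colimits (clause (4)) --- and these are precisely the two hypotheses of that theorem. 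Local smallness of $\hat{\Cat}$ (clause (1)) is what makes the interleaving distance, hence the phrase ``metrically complete'', meaningful here.

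The main obstacle, I expect, is the one step above that is genuinely not a diagram chase: establishing $L_\e L_\de = L_{\e+\de}$ and $L_0 = Id_{\hat{\Cat}}$ and checking that these assemble $\e\mapsto L_\e$ into a bona fide (essentially strict) monoidal functor $[0,\infty)\to\mathbf{End}(\hat{\Cat})$, as opposed to a mere family of functors that happen to agree on representables. This hinges on using two facts with care: (i) that $\hat{\Cat}$ is the free small-colimit completion of $\Cat$, so that a cocontinuous endofunctor of $\hat{\Cat}$, and a natural transformation between two such, is determined by its restriction along $h_-$; and (ii) the uniqueness clause in the universal property of $\mathrm{Lan}_{h_-}$, which upgrades ``agrees on representables'' to ``equal'' for the comparison $2$-cells. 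Once those are in hand, every coflow axiom for $L_\e$ reduces mechanically to the corresponding axiom for $T_\e$, and parts (2) and (3) follow formally.
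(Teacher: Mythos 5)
Your proposal is correct and follows essentially the same route as the paper: build the coflow data on $[\Cat^{op},\mathbf{Set}]$ by transporting the structure of $T_\e$ through the universal property of the pointwise left Kan extension along $h_-$ (the paper phrases your ``cocontinuous endofunctors of the free cocompletion are determined on representables'' as ``$L_\e$, being a left adjoint, preserves left Kan extensions,'' and chains the resulting canonical isomorphisms to get $L_{\e+\de}\Rightarrow L_\e L_\de$), then read off (2) from Yoneda plus clause (3) of the lemma, and (3) from clauses (2) and (4) via Theorem~\ref{thm:CategorialConditionsForCompleteness}. Your parenthetical caution that the multiplication comparison may only be a natural isomorphism matches the paper, which likewise obtains $L_{\e+\de}\Rightarrow L_\e L_\de$ as a composite of natural isomorphisms rather than an identity.
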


\begin{proof}
To show $L_\e$ defines a coflow, we must give natural transformations $L_\e \Rightarrow L_\de$ for $\de \leq \e$, $L_0 \Rightarrow Id$, and $L_{\e+\de} \Rightarrow L_\e L_\de$ so that all the diagrams from Section \ref{sec:Background} commute.\\

First, we can define the natural transformations $L_\e \Rightarrow L_\de$ by applying the universal property of a Left Kan extension to the diagram 

\begin{center}
\begin{tikzpicture}
\matrix (m) [matrix of math nodes,row sep=3em,column sep=4em,minimum width=2em]
{
\Cat & \Cat & \left[\Cat^{op},\mathbf{Set}\right]\\
\Cat & \Cat & \left[\Cat^{op},\mathbf{Set}\right]\\
\left[\Cat^{op},\mathbf{Set}\right]\\
};
\path[-stealth]
    (m-1-1) edge node [above] {$T_\e$ } (m-1-2)
    		edge[double, double distance=2pt, -] node [left] { } (m-2-1)
    (m-1-2) edge node [above] {$h_-$ } (m-1-3)
    		edge[double, double distance=2pt, -] node [above] { } (m-2-2)	
	(m-1-3) edge[double, double distance=2pt, -] node [above] { } (m-2-3)
    (m-2-1) edge node [above] {$T_\de$ } (m-2-2)
    		edge node [left] {$h_-$ } (m-3-1)
    (m-2-2) edge node [above] {$h_-$ } (m-2-3)
    (m-3-1) edge node [below] {$L_\de$ } (m-2-3);
            
\end{tikzpicture}
\end{center}

This also shows why $L_\e$ is functorial in the $\e$. Lemma \ref{lemma:YonedaFacts}.6 says $L_0 = Id$, so that natural transformation is the identity.\\

Notice also that $L_\e$, being a left adjoint, preserves left Kan extensions; see Theorem 1 of Section X.5 in \cite{MacLane}. In particular, 
\begin{align*}
L_\e \circ L_\de &\Leftarrow L_\e \circ \text{Lan}_{h_-} (h_- \circ T_\de)\\
&\Leftarrow \text{Lan}_{h_-} \left(L_\e \circ h_- \circ T_\de\right) \\
&= \text{Lan}_{h_-} \left(h_- \circ T_\e \circ T_\de \right)\\
&= \text{Lan}_{h_-} \left(h_- \circ T_{\e + \de}\right)\\
&\Leftarrow L_{\e + \de}
\end{align*}
where the arrows are the unique natural isomorphisms making the appropriate diagram commute. This is the natural transformation $L_{\e + \de} \Rightarrow L_\e L_\de$ we choose.\\

The only thing left to proving (1) is verifying that the four coflow relation diagrams commute. The ones involving $L_0$ are trivial because $L_0 =  Id$. The other two follow from the properties of a Kan extension.\\

Yoneda says $h_-$ is a full embedding of categories. Lemma \ref{lemma:YonedaFacts}.3 says this is a strict co-equivariant embedding of categories. This gives (2).\\

Lastly, Lemmas \ref{lemma:YonedaFacts}.2 and \ref{lemma:YonedaFacts}.4 tell us that $[\Cat^{op}, \mathbf{Set}]$ satisfies the conditions of Theorem \ref{thm:CategorialConditionsForCompleteness}. This gives (3). 
\end{proof}

\begin{corollary}\label{cor:ExistenceOfCompletion} For any category $(\Cat, T_\e)$ with a strict flow, there is a full subcategory $\overline{\Cat}$ of $\left[\Cat^{op},\mathbf{Set}\right]$ so that
\begin{enumerate}
\item $\Cat$ is a full subcategory of $\overline{\Cat}$.
\item $\Cat$ is metrically dense in $\overline{\Cat}$; i.e. for every object $Z\in \overline{\Cat}$ and real number $\e > 0$, there is an object $A \in \Cat$ so that $d(A,Z) < \e$. 
\item $L_\e$ preserves $\overline{\Cat}$. 
\item $(\overline{\Cat}, L_\e)$ is metrically complete.
\item $\overline{\Cat}$ is the largest subcategory of $\left[\Cat^{op},\mathbf{Set}\right]$ to satisfy properties 1-4. 
\end{enumerate}
\end{corollary}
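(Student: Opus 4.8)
The plan is to obtain Corollary \ref{cor:ExistenceOfCompletion} by directly instantiating the preceding theorem together with Lemma \ref{lemma:definitionOfCompletion} and Definition \ref{def:Closure}. The theorem just proved tells us that $\left(\left[\Cat^{op},\mathbf{Set}\right], L_\e\right)$ is a metrically complete category with a coflow, and that $h_- : (\Cat, T_\e) \to \left(\left[\Cat^{op},\mathbf{Set}\right], L_\e\right)$ is a full, strict coflow-equivariant embedding. Lemma \ref{lemma:YonedaFacts}(2) and (4) say $\left[\Cat^{op},\mathbf{Set}\right]$ is cocomplete and $L_\e$ preserves colimits. So all the hypotheses of Lemma \ref{lemma:definitionOfCompletion} are in force, with $(\mathcal{D}, S_\e) = \left(\left[\Cat^{op},\mathbf{Set}\right], L_\e\right)$ and the strict flow $T_\e$ on the subcategory $\Cat$.

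First I would apply Lemma \ref{lemma:definitionOfCompletion} to produce a full subcategory satisfying properties 1--4, and then invoke Definition \ref{def:Closure}, which already establishes that there is a \emph{largest} such subcategory; call it $\overline{\Cat}$. Properties 1--4 of the corollary are then exactly properties 1--4 of Lemma \ref{lemma:definitionOfCompletion} specialized to this embedding (noting that $S_\e = L_\e$ here, so ``$S_\e$ preserves $\mathcal{Z}$'' becomes ``$L_\e$ preserves $\overline{\Cat}$''). Property 5, maximality, is precisely the content of Definition \ref{def:Closure}: the proof accompanying that definition constructs $\mathcal{X}$ as the full subcategory on the union of the object-classes of all subcategories satisfying 1--4, and verifies that $\mathcal{X}$ itself satisfies 1--4, hence is the largest. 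So $\overline{\Cat}$ is just the closure of $(\Cat, T_\e)$ in $\left(\left[\Cat^{op},\mathbf{Set}\right], L_\e\right)$ in the sense of Definition \ref{def:Closure}.

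There is essentially no new mathematics here; the only thing to be careful about is a bookkeeping issue of size. Lemma \ref{lemma:YonedaFacts} and the preceding theorem are stated for $(\Cat, T_\e)$ a \emph{small} category, whereas the corollary only asks for a strict flow. The honest fix is to state the corollary under the same smallness (or local smallness, with a universe enlargement) hypothesis used for Lemma \ref{lemma:YonedaFacts}, or to remark that one passes to a larger universe in which $\Cat$ is small so that $\left[\Cat^{op},\mathbf{Set}\right]$ is legitimately cocomplete and locally small; this is the standard device and I would simply cite it. The main (and really the only) obstacle is therefore not a proof difficulty but making sure the smallness hypotheses are stated consistently between Lemma \ref{lemma:YonedaFacts}, the theorem, and this corollary. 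Modulo that, the proof is two sentences: ``Apply Lemma \ref{lemma:definitionOfCompletion} with $(\mathcal{D},S_\e) = \left(\left[\Cat^{op},\mathbf{Set}\right], L_\e\right)$, using the previous theorem and Lemma \ref{lemma:YonedaFacts}(2),(4) to verify its hypotheses; then let $\overline{\Cat}$ be the closure of $(\Cat,T_\e)$ in $\left[\Cat^{op},\mathbf{Set}\right]$ as in Definition \ref{def:Closure}, whose defining proof gives properties 1--5.''
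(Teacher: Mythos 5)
Your proof is correct and takes essentially the same route as the paper, which likewise obtains the corollary by applying Lemma \ref{lemma:definitionOfCompletion} (with its hypotheses supplied by the preceding theorem) and then invoking Definition \ref{def:Closure} for the maximality in property 5. Your observation about the smallness hypothesis is a fair catch of a bookkeeping inconsistency the paper silently glosses over, but it does not alter the argument.
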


\begin{proof}
The theorem is essentially checking the conditions of Lemma \ref{lemma:definitionOfCompletion}, of which this corollary is a direct application. We then just apply Definition \ref{def:Closure}.
\end{proof}

\begin{definition} We call $(\overline{\Cat}, L_\e)$ the \emph{metric completion of $(\Cat, T_\e)$}. 
\end{definition}

\section{Examples}\label{sec:Examples}

\subsection{$(\Q, \geq)$}
Consider the category $(\Q, \geq)$, with one object for every rational number, and $\ds \Hom(q,p) = \left\{\begin{array}{cl} \ast & \text{ if }q \geq p\\ \emptyset & \text{ otherwise}\end{array}\right.$, where $\ast$ is the set with one element and $\emptyset$ is the empty set. Define a coflow $T_\e q = q + \e$ (this is technically only defined for rational $\e$, but all our previous work goes through anyway). Then $q$ and $p$ are $\e$-interleaved if and only if $|q-p| \leq \e$. \\

A sequence $\{\tilde{q}_k\}$ is Cauchy in the context of this paper if and only if it is Cauchy in the usual sense. We can find a subsequence $\{q_k\}$ so that $q_k$ and $q_{k+1}$ are $\ds \e_{k+1}$-interleaved. Then our diagram looks like $ q_1 + \e_1 \geq q_2 + \e_2 \geq \cdots$, which has a colimit in $(\Q, \geq)$ if and only if $\{q_k\}$ converges to a rational number in the usual sense. \\

The Yoneda embedding is a functor $h_-: (\Q, \geq) \to [(\Q, \geq)^{op}, \mathbf{Set}]$. In particular, $$h_q(p) = \Hom(p,q) = \left\{\begin{array}{cl} \ast & \text{ if }p \geq q\\ \emptyset & \text{ otherwise}\end{array}\right. ,$$ and $h_q$ is a contravariant functor where $h_-$ sends the inequality $q_1 \geq q_2$ to the inclusion  $h_{q_2}(p) \supseteq h_{q_1}(p)$. Thus, $h_q$ is essentially a Dedekind cut, and if $r$ is the traditional limit of $\{q_k\}$ in $\R$, then the categorical colimit of $h_{q_1 + \e_1}\to h_{q_2 + \e_2} \to \cdots $ in $[(\Q, \geq)^{op}, \mathbf{Set}]$ is ``$h_r$'', where $$h_r(p) = \left\{\begin{array}{cl} \ast & \text{ if }p \geq r\\ \emptyset & \text{ otherwise}\end{array}\right.$$

One can show the completion $\overline{(\Q, \geq)}$ is equivalent to the category $(\R, \geq)$. However, it is not true that the two are isomorphic as categories. In particular, $(\R, \geq)$ is a skeletal category, while the completion $\overline{(\Q, \geq)}$ is not.

\subsection{Persistence Modules}

Persistence modules form the most popular category with a flow, though there are really many different categories depending on which of many different types of conditions we impose. A recent paper by Bubenik and Vergili \cite{Bub18} studies metric completeness for several of the most used categories of persistence modules (along with a host of other properties from general topology).\\

A persistence module is a functor from the order category\footnote{Notice this category is the opposite category of the one considered in the last section, i.e. $(\R, \leq) = (\R, \geq)^{op}$.} $(\R, \leq)$ to the category of $\mathbf{k}$-vector spaces $\mathbf{Vect_k}$. The category of persistence modules can be denoted $[(\R,\leq), \mathbf{Vect_k}]$. The interleaving functor $T_\e$ on a persistence module is given by 
$$(T_\e M)(a) = M(a + \e)\hspace{1.5in} (T_\e M)(a\leq b) = M(a+\e \leq b+\e)$$

\begin{example}$[(\R,\leq), \mathbf{Vect_k}]$ with the flow $T_\e$ is metrically complete.\end{example}
\begin{proof} We must show $[(\R,\leq), \mathbf{Vect_k}]$ with $T_\e$ satisfies the conditions of Corollary \ref{thm:CategorialConditionsForCompleteness}. Taking limits of functors is done pointwise; since $\mathbf{Vect_k}$ is complete, so is $[(\R,\leq), \mathbf{Vect_k}]$. Further, $T_\e$ is an autoequivalence of categories, so it preserves basically all categorical constructions, including limits. 
\end{proof}

Let $\mathbf{Vect_k^{fin}}$ be the category of finite dimensional $\mathbf{k}$-vector spaces. Then $[(\R,\leq), \mathbf{Vect_k^{fin}}]$ is the category of persistence modules $M$ where $M(a)$ is finite dimensional for each $a\in \R$. We can use the flow $T_\e$ as before because $T_\e$ preserves $[(\R,\leq), \mathbf{Vect_k^{fin}}]$.
\begin{example}$[(\R,\leq), \mathbf{Vect_k^{fin}}]$ with $T_\e$ is \emph{not} metrically complete. 
\end{example}
\begin{proof}Let $[a,b)$ be the interval module where 
$$M(s) = \left\{\begin{array}{cl}\mathbf{k} & \text{ for }s\in [a,b)\\ 0 & \text{ otherwise}\end{array}\right. \hspace{1in} M(s\leq t) = \left\{\begin{array}{cl}Id_\mathbf{k} & \text{ for }s,t\in [a,b)\\ 0 & \text{ otherwise}\end{array}\right.$$

Define the sequence of persistence modules $A_n = \bigoplus_{k=1}^n \left[\frac{-1}{k}, \frac{1}{k}\right)$. Then $\{A_n\}$ is a Cauchy sequence but has no metric limit. $\{A_n\}$ is Cauchy because for every $\e > 0$, there is an $N$ so that $T_\e A_n = T_\e A_m$ for all $n,m > N$, so the $\e$-interleaving is easy to find. It has no metric limit, because any such limit $M$ would need an infinite dimensional vector space $M(0)$. 
\end{proof}

In \cite{Bub18}, Bubenik and Vergili make similar calculations for categories of persistence modules with many different types of conditions. Note that their proof for completeness is different from ours and uses slightly different hypotheses (in general neither weaker nor stronger). 

\subsection{Generalized Persistence Modules}\label{ex:GPM}

Bubenik, de Silva, and Scott \cite{Bub15} introduced the notion of a generalized persistence module (GPM). A category of generalized persistence modules is a functor category $[\mathcal{P},\mathcal{D}]$, where $\mathcal{P}$ is a poset category and $\mathcal{D}$ is some arbitrary category. To calculate interleaving distances, they also introduced the notion of a superlinear family of translations. Then Munch, de Silva, and Stefanou \cite{MunchCatsFlow} showed how generalized persistence modules with a superlinear family of translations can be thought of a flow on the category $[\mathcal{P},\mathcal{D}]$, where the flow functors $T_\e$ are pullbacks along translations.\\

The important observation in checking metric completeness for this case is that limits in functor categories are computed pointwise. This has two corollaries:
\begin{enumerate}
\item Pulling back a functor along a translation preserves limits.
\item If $\mathcal{D}$ is a complete category, so is $[\mathcal{P},\mathcal{D}]$. 
\end{enumerate}

Therefore, we get the following:

\begin{corollary} Assume $\mathcal{P}$ is a poset category, $\mathcal{D}$ is a (categorically) complete category, and $[\mathcal{P}, \mathcal{D}]$ is the category of functors from $\mathcal{P}$ to $\mathcal{D}$. Choose a superlinear family of translations, and use it to define an interleaving distance as in \cite{Bub15} or \cite{MunchCatsFlow}. Then $[\mathcal{P}, \mathcal{D}]$ is metrically complete.
\end{corollary}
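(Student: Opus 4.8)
The plan is to verify the two hypotheses of Theorem~\ref{thm:CategorialConditionsForCompleteness} (stated in its dual, coflow form) for the category $[\mathcal{P},\mathcal{D}]$ equipped with the flow $T_\e$ coming from the chosen superlinear family of translations. Recall that that theorem, dualized, asks for two things: that $[\mathcal{P},\mathcal{D}]$ contain all colimits of sequential diagrams $\bullet \to \bullet \to \bullet \to \cdots$, and that each $T_\e$ preserve such colimits. Since a poset category carries no interesting interleaving data on its own, the whole content is the interplay between ``colimits are pointwise'' and ``$T_\e$ is a pullback along a translation.''

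First I would recall precisely the setup from \cite{Bub15,MunchCatsFlow}: a superlinear family of translations assigns, to each $\e$, a functor (translation) on $\mathcal{P}$, and $T_\e$ acts on a functor $F \in [\mathcal{P},\mathcal{D}]$ by precomposition with that translation, i.e.\ $T_\e F = F \circ (\text{translation})$. The superlinearity axioms are exactly what make the collection $\{T_\e\}$ into a flow (with the natural transformations $T_{\de \le \e}$, the unit, and the multiplication coming from the order and composition structure on translations); I would cite \cite{MunchCatsFlow} for this rather than re-derive it. Then the first bullet of the (coflow) theorem follows from the standard fact that limits and colimits in a functor category $[\mathcal{P},\mathcal{D}]$ are computed objectwise, so since $\mathcal{D}$ is cocomplete---wait, the hypothesis is that $\mathcal{D}$ is \emph{complete}, and we are in the coflow case needing colimits. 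Here I should be careful: the corollary as stated needs the dual, so in fact one either assumes $\mathcal{D}$ cocomplete and works with colimits, or---matching the stated ``complete'' hypothesis---one observes that the flow case of Theorem~\ref{thm:CategorialConditionsForCompleteness} applies directly, needing only that $[\mathcal{P},\mathcal{D}]$ have sequential limits (which it does, objectwise, from completeness of $\mathcal{D}$) and that $T_\e$ preserve limits. So the cleanest route is to use the \emph{flow} (not coflow) version of the theorem.

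The remaining point, and the one requiring the most care, is that $T_\e$ preserves sequential limits. This is corollary~(1) flagged in the excerpt: precomposition functors $(-)\circ K : [\mathcal{P},\mathcal{D}] \to [\mathcal{P},\mathcal{D}]$ always preserve whatever limits exist in $\mathcal{D}$, because for each object $p \in \mathcal{P}$ the evaluation $(\lim_i F_i)\circ K$ at $p$ is $(\lim_i F_i)(K p) = \lim_i (F_i (K p)) = \lim_i ((F_i \circ K)(p))$, using objectwise computation of limits twice and the fact that $K$ does nothing but relabel the indexing object. I would write this out as the key lemma (it is genuinely the heart of the matter), note that it applies verbatim with $K$ the $\e$-translation, and conclude that $T_\e$ preserves all small limits, in particular sequential ones.

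With both hypotheses in hand, I would invoke Theorem~\ref{thm:CategorialConditionsForCompleteness} to conclude $([\mathcal{P},\mathcal{D}], T_\e)$ is metrically complete. The main obstacle is not any deep mathematics but bookkeeping: making sure the direction (flow vs.\ coflow, limits vs.\ colimits) is consistent with how the interleaving distance from \cite{Bub15} was set up via superlinear families, and confirming that ``pullback along a translation'' really is plain precomposition so that the objectwise-limits argument goes through unobstructed. If for some reason the intended reading is genuinely the coflow case, one simply replaces ``complete'' by ``cocomplete'' throughout and dualizes the lemma; the structure of the argument is identical.
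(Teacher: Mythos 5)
Your proposal is correct and follows essentially the same route as the paper: both arguments reduce to the two observations that limits in $[\mathcal{P},\mathcal{D}]$ are computed objectwise (so completeness of $\mathcal{D}$ gives the needed sequential limits) and that precomposition with a translation therefore preserves limits, after which Theorem \ref{thm:CategorialConditionsForCompleteness} applies. Your explicit objectwise computation showing $T_\e$ preserves limits, and your resolution of the flow-versus-coflow bookkeeping in favor of the flow version, just spell out what the paper leaves implicit.
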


Interestingly, it is complete no matter which superlinear family of translations is chosen.

\subsection{The Derived Category of Sheaves}\label{ex:sheaves}

Recently, Kashiwara and Schapira \cite{KS} defined a flow on the derived category of sheaves of $\mathbf{k}$-vector spaces on $\R^m$, denoted $D(\mathbf{k}_{\R^m})$. We will not rehash all the details of that paper, but the broad strokes are these:\\

Let $s: \R^m \times \R^m \to \R^m$ be the addition map $s(x,y) = x+y$. Let $\mathbf{k}$ be a vector space. Let $K_\e = \mathbf{k}_{\{\|x\| \leq \e\}}$ for $\e\geq 0$. Then for an object $A\in D(\mathbf{k}_{\R^m})$, define\footnote{It deserves to be pointed out that while $T_\e$ makes sense as a functor on $D(\mathbf{k}_{\R^m})$ (without the bounded or constructible conditions) as well as on $D_c^b(\mathbf{k}_{\R^m})$ (with both the bounded and constructible conditions), it doesn't make sense on $D_c(\mathbf{k}_{\R^n})$; specifically, if $A\in D_c(\mathbf{k}_{\R^n})$ is not bounded, it is very possible that $T_\e A$ is not constructible.} $T_\e A = K_\e \star A = Rs_!(K_\e \boxtimes A)$. \\

It is shown in \cite{KS} that $T_\e$ forms a strict coflow on $D(\mathbf{k}_{\R^m})$.  \\

\begin{corollary} 
$D(\mathbf{k}_{\R^m})$ under the interleaving distance is metrically complete. 
\end{corollary}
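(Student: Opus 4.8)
The plan is to apply Theorem \ref{thm:CategorialConditionsForCompleteness} (in its coflow form) to the category $D(\mathbf{k}_{\R^m})$ with the flow $T_\e A = K_\e \star A$. By that theorem, it suffices to check two things: that $D(\mathbf{k}_{\R^m})$ contains all colimits of diagrams of the shape $\bullet \to \bullet \to \bullet \to \cdots$ (i.e.\ sequential colimits), and that each $T_\e$ preserves such colimits. So the proof reduces to two verifications, and the bulk of the work is citing the right facts about derived categories of sheaves from \cite{KS} and standard references.

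First I would address cocompleteness. The derived category $D(\mathbf{k}_{\R^m})$ of all sheaves of $\mathbf{k}$-vector spaces (no boundedness or constructibility) admits all small colimits — this is a standard fact about unbounded derived categories of Grothendieck abelian categories (every Grothendieck category has a model structure, or one can invoke Brown representability / the existence of homotopy colimits). In particular sequential colimits exist. I would state this as a cited fact rather than reprove it. It is worth being explicit here that we must work with $D(\mathbf{k}_{\R^m})$ and not, say, $D^b_c(\mathbf{k}_{\R^m})$, exactly as the footnote in the excerpt warns: the constructible bounded version is not closed under the relevant colimits, so it would fail just as $[(\R,\leq),\mathbf{Vect_k^{fin}}]$ did.

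Next I would check that $T_\e = K_\e \star (-) = Rs_!(K_\e \boxtimes (-))$ preserves sequential colimits. The functor $T_\e$ is built from $Rs_!$ (proper pushforward, a left adjoint up to the six-functor formalism — more precisely $Rs_!$ has a right adjoint $s^!$) and the external-tensor-with-$K_\e$ operation $(-) \mapsto K_\e \boxtimes (-)$, which is $-\otimes$ against a fixed object after pullback and hence also a left adjoint. A composite of left adjoints is a left adjoint, so $T_\e$ is a left adjoint and therefore preserves all colimits, in particular sequential ones. Alternatively, and perhaps more robustly, one can argue that $\star$ commutes with colimits in each variable directly from $Rs_!$ commuting with colimits (which itself follows because $Rs_!$ is computed via a colimit-preserving model, or because it has a right adjoint) together with $-\otimes-$ commuting with colimits. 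Either way the key input is that $Rs_!$ preserves colimits.

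The main obstacle is not any single hard computation but rather making sure the foundational facts are invoked at the right level of generality: the unbounded derived category $D(\mathbf{k}_{\R^m})$ must genuinely be cocomplete (this needs the Grothendieck-abelian / unbounded-derived-category machinery, not the naive derived category of bounded complexes), and $T_\e$ must genuinely be shown to be a left adjoint or otherwise colimit-preserving on that category, since the adjunction $(Rs_!, s^!)$ in the six-functor formalism is exactly what guarantees this. Once those two citations are in place, the corollary follows immediately by quoting the coflow half of Theorem \ref{thm:CategorialConditionsForCompleteness}; I would also remark that this recovers, via the density/closure machinery of Section \ref{sec:Completions}, the sense in which the ``computable'' subcategory $D^b_c(\mathbf{k}_{\R^m})$ sits inside a complete ambient category.
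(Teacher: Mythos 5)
Your overall strategy is exactly the paper's: invoke the coflow half of Theorem \ref{thm:CategorialConditionsForCompleteness}, which reduces the corollary to (a) existence of sequential colimits in $D(\mathbf{k}_{\R^m})$ and (b) preservation of such colimits by $T_\e$. Both verifications, however, are carried out by genuinely different means. For (b), the paper simply observes that $K_\e \star -$ is an autoequivalence of $D(\mathbf{k}_{\R^m})$ (its quasi-inverse being convolution with the Kashiwara--Schapira kernel $K_{-\e}$), hence preserves all categorical constructions; you instead exhibit $T_\e = Rs_!(K_\e \boxtimes -)$ as a composite of left adjoints via the $(Rs_!, s^!)$ adjunction. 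Your route is more robust in that it never uses the special invertibility of $K_\e$ under $\star$, but it does require the six-functor adjunctions to be available at the level of the unbounded derived category. For (a), the paper gives a hands-on argument: the sequential colimit functor on $Sh(\mathbf{k}_{\R^m})$ is exact (right exactness is automatic; left exactness is checked on stalks, reducing to exactness of filtered colimits of vector spaces), and hence descends to the derived category; you instead cite general cocompleteness of the unbounded derived category of a Grothendieck abelian category. Your citation is cleaner and sidesteps a subtlety the paper glosses over --- an exact functor $Sh^\triangle \to Sh$ derives to a functor $D(Sh^\triangle) \to D(Sh)$, and identifying this with a colimit functor on $D(Sh)^\triangle$ requires an extra step --- at the cost of invoking heavier foundational machinery rather than an elementary self-contained computation. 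Both verifications are correct, so your proof stands.
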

\begin{proof}We must show that convolving with $K_\e$ preserves colimits and that $D(\mathbf{k}_{\R^m})$ has enough colimits. Convolution with $K_\e$ is an autoequivalence of categories, and so it preserves basically every categorical construction, including colimits.\\

Thus, it suffices to show that a diagram $\triangle$ of the form 
$$A_1 \to A_2 \to ... $$
has colimits in $D(\mathbf{k}_{\R^m})$. Certainly $\triangle$ has colimits in $Sh(\mathbf{k}_{\R^m})$, because $Sh(\mathbf{k}_{\R^m})$  is cocomplete. The functor $\colim_\triangle: Sh(\mathbf{k}_{\R^m})^\triangle \to Sh(\mathbf{k}_{\R^m})$ is right exact, because all colimit functors are right exact. If we can show it is left exact too, then $\colim_\triangle$ would be exact, and it would extend to a functor $\colim_\triangle : D(\mathbf{k}_{\R^m})^\triangle \to D(\mathbf{k}_{\R^m})$. This would mean $\triangle$ has colimits in $D(\mathbf{k}_{\R^m})$.\\

Therefore, it suffices to show that $\colim_\triangle$ is left exact. Say we have the exact sequence of diagrams
\begin{center}
\begin{tikzpicture}
\matrix (m) [matrix of math nodes,row sep=3em,column sep=4em,minimum width=2em]
{
0 & 0 & 0 & \cdots & \\
A_1 & A_2 & A_3 & \cdots & A\\
B_1 & B_2 & B_3 & \cdots & B\\
};
\path[-stealth]
    (m-1-1) edge node [left] { } (m-2-1)
    (m-1-2) edge node [left] { } (m-2-2)	
    (m-1-3) edge node [left] { } (m-2-3)	
    (m-2-1) edge node [left] {$p_1$} (m-3-1)
    (m-2-2) edge node [left] {$p_2$} (m-3-2)	
    (m-2-3) edge node [left] {$p_3$} (m-3-3)
    (m-2-5) edge node [left] {$p$} (m-3-5)	
    (m-2-1) edge node [above] { }(m-2-2)
    (m-2-2) edge node [above] { }(m-2-3)
    (m-2-3) edge node [above] {}(m-2-4)
    (m-2-4) edge node [above] { }(m-2-5)
    (m-3-1) edge node [above] { }(m-3-2)
    (m-3-2) edge node [above] { }(m-3-3)
    (m-3-3) edge node [above] { }(m-3-4)
    (m-3-4) edge node [above] { }(m-3-5);
            
\end{tikzpicture}
\end{center}

where $A$ and $B$ are colimits of their respective diagrams, and all the $A_k, B_k$ are sheaves. We wish to show that $p$ is injective. This amounts to showing that $p$ is injective on stalks. But because taking colimits commutes with taking stalks, we can assume the $A_k$, $B_k$, etc. in the above diagram are vector spaces over $k$. \\

Now we must show the direct limit of direct system of vector spaces is an exact functor. This is a standard result, but a proof is included here because I could not find one in the literature. We can explicitly define $A$ as 

$$A = \bigoplus_{n=1}^\infty A_n \Big/\left(a \sim \phi(a)\right)\hspace{0.5in} B = \bigoplus_{n=1}^\infty B_n \Big/\left(b \sim \psi(b)\right)$$
where $\phi$ and $\psi$ are the ``shift'' maps on $\bigoplus_{n=1}^\infty A_n$ and $\bigoplus_{n=1}^\infty B_n$, respectively. Say that $p([a]) = 0$ for some $[a]\in A$. We can represent $[a]$ which some $a\in A_n$ for some $n$, and $p([a]) = [p_n(a)] \in B$. Therefore, $[p_n(a)] = 0$, so $\psi^k(p_n(a)) = 0$ for some $k$. By commutativity of the diagram, $p_{n+k}(\phi^k(a)) = 0$. By injectivity of $p_{n+k}$, $\phi^k(a) = 0$. Therefore, $[a] = [\phi^k(a)] = 0$. This shows that $p$ is injective, which completes our proof.
\end{proof}

%\begin{remark} $D(\mathbf{k}_{\R^m})$ is an example of a category with a coflow which is metrically complete but not cocomplete.
%\end{remark}

\begin{example} As an important example, $D^b_c(\mathbf{k}_{\R^m})$ is not metrically complete. Let $F_n = \bigoplus_{k=1}^N \mathbf{1}_{[0, 2^{-k})}[-k]$. Then $\{F_n\}$ is a Cauchy sequence, but the limit does not exist in $D^b_c(\mathbf{k}_{\R^m})$. \\

Nor is it only unboundedness that is the problem. Define $C_n = \bigcup_{k=1}^n \mathbf{1}_{(2^{-k},2^{-(k-1)}]}$ and $C = \bigcup_{n=1}^\infty C_n$. Set $F_n = \mathbf{1}_{C_n}$ and $F =\mathbf{1}_C$ . Each of the $F_n \in D^b_c(\mathbf{k}_{\R^n})$ and $\lim_{n\to \infty} F_n$ exists in $D(X)$ and is bounded, but $\lim_{n\to \infty} F_n$ is not constructible. 
\end{example}

\section{Categories with a flow and Polish Spaces}\label{sec:Polish}

\footnote{This section will be more informal, but it will hopefully be useful for giving context for this paper. In particular, we will not rigorously define what we mean by separability. For the purposes of this paper, separability can mean that the induced extended quasimetric on the coskeleton is separable.}A pressing motivation for the research in this paper was in providing conditions for categories with a flow to be Polish spaces, i.e. complete and separable. Polish spaces are important because they are the foundation for many results from statistics and probability. Particularly when considering convergence of probability measures, it is important for those probability measures to be on a complete, separable space; see, for example, Prokhorov's Theorem or the result that every probability measure on a Polish space is tight \cite{Billingsley}.\\

This paper thus far has only addressed one side of that issue, that of completeness. Separability of categories with a flow seems difficult to characterize in general. Certainly specific cases are tractable, and it does seem to be related to the notion of $\kappa$-accessibility and similar concepts. However, an all-purpose, easy-to-check test remains elusive. \\

Often what happens in practice is that we have a ``little'' category which is separable sitting inside a ``big'' category which is complete, but neither is Polish. In this situation, we can find a Polish space between the two. In particular, the closure of the ``little category'' in the ``big category'' (see Definition \ref{def:Closure}) is a Polish space. \\ 

As an example, take the space of bounded, constructible sheaves $D^b_c(\mathbf{k}_{\R^m})$. This space is not complete, as shown in Section \ref{ex:sheaves}, but it is separable as long as $\mathbf{k}$ is countable, which can be shown with the help of Theorem 2.11 of \cite{KS}. On the other hand, $D(\mathbf{k}_{\R^m})$ is not separable for $m\geq 2$, but it is complete. Neither space is an appropriate space in which to do statistics, but we can use Lemma \ref{lemma:definitionOfCompletion} to create a Polish space of sheaves. \\

A similar question is ``What category of persistence modules should be used from applications?'' On the one hand, the whole category of persistence modules is metrically complete. However, in \cite{Bub18} it is shown that the collection of isomorphism classes of persistence modules is not only non-separable, but not even a set! Thus the whole category of persistence modules is too big. \\

On the other hand, there is a useful subcategory which is separable, namely the constructible persistence modules; see Remark 3.1 of \cite{Bub18}. However, this space is not complete. To see why this is a problem, consider the probability measure on persistence modules we get from looking at the persistence modules obtained from Brownian motion as was done, more or less, in \cite{Adler}. This measure is supported on persistence modules which are not constructible (nor even pointwise finite-dimensional). Thus, the category of constructible persistence modules is too little.\footnote{This was noticed in the ``decategorified'' setting of persistence diagrams by Mileyko, Mukherjee, and Harer \cite{MMH}.}\\

In fact, \cite{Bub18} considers over ten different categories of persistence modules, and none of them were both complete and separable except the ones which were trivially so, like the category of only the zero module or the category of ephemeral modules. One way to solve this problem is to take a separable subcategory and find its closure in all persistence modules using Definition \ref{def:Closure}.\\

The closure $\Cat$ of the constructible persistence modules in all persistence modules seems like a useful candidate category for using persistence modules in applications for two reasons. First, it is Polish, so we can use the relevant theorems from probability and statistics. Second, the acclaimed 1-Lipschitz map from the Stability Theorem \cite{Harer} sending bounded, continuous functions to persistence modules has its image in $\Cat$. In particular, it seems large enough to handle the expected examples of random persistence modules.

\bibliography{sources} 
\bibliographystyle{plain}

\large
Department of Mathematics, Duke University\\ 
\indent \url{joshua.cruz@duke.edu}

\end{document}